\colorlet{colorJP}{orange}
\colorlet{colorMartin}{magenta}
\DeclareMathOperator{\up}{up}
\DeclareMathOperator{\down}{down}
\newcommand\blfootnote[1]{%
  \begingroup
  \renewcommand\thefootnote{}\footnote{#1}%
  \addtocounter{footnote}{-1}%
  \endgroup
}
\theoremstyle{plain}
\newtheorem{thm}{Theorem}
\newtheorem{proposition}{Proposition}
\newtheorem{corollary}{Corollary}
\newtheorem{lemma}{Lemma}
\newtheorem{assumption}{Assumption}
\theoremstyle{definition}
\theoremstyle{remark}
\newtheorem{example}{Example}
\begin{document}

\title{On empirical Hodge Laplacians under the manifold hypothesis}
\author{Jan-Paul Lerch\thanks{Universit\"{a}t Bielefeld, Germany.} \qquad
Martin Wahl\footnotemark[1] \qquad Petr Zamolodtchikov\footnotemark[1] 
}
\date{}


\date{}
\maketitle
\blfootnote{\textit{2020 Mathematics Subject Classification.} 62R30, 62R40, 53Z50, 35K08, 05C80 \newline
\textit{Key words and phrases.} Graph Laplacian, Hodge Laplacian on a graph, point cloud, manifold hypothesis, Laplace-Beltrami operator on differential forms, heat semigroup, heat kernel, Hodge theory, perturbation theory, U-statistics.
}
\begin{abstract}
Given i.i.d.~observations uniformly distributed on a closed submanifold of the Euclidean space, we study higher-order generalizations of graph Laplacians, so-called Hodge Laplacians on graphs, as approximations of the Laplace-Beltrami operator on differential forms. Our main result is a high-probability error bound for the associated Dirichlet forms. This bound improves existing Dirichlet form error bounds for graph Laplacians in the context of Laplacian Eigenmaps, and it provides insights into the Betti numbers studied in topological data analysis and the complementing positive part of the spectrum.
\end{abstract}


\section{Introduction}
Methods of dimensionality reduction uncover hidden information from complex data sets and high-dimensional observations. Leading examples are principal component analysis and its nonlinear extensions to kernel principal component analysis or manifold learning. Due to the availability of large amount of data, such methods have become indispensable tools throughout science and engineering. 

Principal component analysis is a basic linear dimensionality reduction method, in which the data is projected onto the linear space spanned by the leading eigenvectors of the empirical covariance matrix \cite{Joll02}. This allows to reduce the dimension, while preserving as much variation in the data as possible. Despite being a classical topic, principal component analysis is still intensively studied and exhibits many different phenomena in high dimensions \cite{JP18,Wain19,Kolt21,MR4517351}.

In contrast, Laplacian Eigenmaps and Diffusion Maps are instances of nonlinear dimensionality reduction. They are typically used under the so-called manifold hypothesis, where the data is assumed to be sampled from a low-dimensional submanifold in a high-dimensional Euclidean space \cite{Belkin03,Coifman06}. They are based on different graph Laplacians (unnormalized graph Laplacians, random walk graph Laplacian, etc.) and their spectral characteristics, which carry important information about the geometry of the underlying graph \cite{Chung97}. The study of the spectral properties of graph Laplacians as approximations of Laplace-Beltrami operators was initiated in \cite{Belkin06} and has since been explored using various approaches \cite{NIPS2006_5848ad95,Gine06,Burago14,MR4130541,MR4452681}, including connections to kernel principal component analysis \cite{Wahl24}.

Higher-order Laplacians are studied in the context of Hodge theory. Classical Hodge theory on Riemannian manifolds is defined in terms of the de Rham complex of differential forms on smooth manifolds and leads to the Laplace-Beltrami operator on differential forms. Analyzing the spectrum of the Laplace-Beltrami operator on $\ell$-forms \cite{Ros97}, particularly its null space, reveals fundamental topological information. For instance, the multiplicity of the zero eigenvalue corresponds to the $\ell$-th Betti number by Hodge's theorem. These concepts have been extended in various directions including simplicial complexes \cite{Eckmann44,Dod76}, metric measure spaces \cite{Smale12,HK24} and weighted graphs \cite{Lim20}. A relationship between random walks on simplicial complexes and higher-order (combinatorial) Laplacians has been established in \cite{Mukherjee16,MR3607124}.

In a complementary but related line of research, topological data analysis aims to provide statistical and algorithmic methods to understand the topological structure of data \cite{BCY18}. One of its most prominent techniques is persistent homology \cite{Edelsbrunner02} and their associated persistent Betti numbers, an extension of classical Betti numbers designed to capture topological structures that persist across scales. Significant statistical work has been conducted on these in a topological context \cite{Bubenik07} and in the context of generic chain complexes \cite{Ginzburg17}. Notably, both persistent homology and Hodge theory can be formulated algebraically as spectral problems \cite{Usher16}.

In this paper, we deal with the statistical analysis of data supported on a submanifold in a high-dimensional Euclidean space and consider the problem of approximating the Laplace-Beltrami operator on differential forms by appropriate empirical Hodge Laplacians. Inspired and guided by results in \cite{Lim11,Smale12,Lim20,HK24}, we construct an empirical exterior calculus, empirical $\ell$-forms, and an empirical Hodge Laplacian. Building on this, we turn to the statistical analysis of such empirical Hodge-Laplacians under the manifold hypothesis, and establish a non-asymptotic error bound for the associated empirical Dirichlet form. This upper bound provides a first step towards more sophisticated spectral convergence and approximation results, including the analysis of the Betti numbers studied in topological data analysis and the complementing positive part of the spectrum. Moreover, specialized to the empirical graph Laplacian, we improve existing Dirichlet form convergence rates in the context of Laplacian Eigenmaps and Diffusion Maps \cite{MR4130541,MR4491976}. In the proof, we combine tools from exterior calculus, matrix analysis, geometric analysis on Riemannian manifolds \cite{Gri09}, and the theory of U-statistics \cite{Mins24}.  

The paper is organized as follows. In \Cref{section:HodgeLaplacian} we provide a brief introduction to weighted Hodge Laplacians on a graph, followed by a discussion of empirical $\ell$-forms in \Cref{section:empiricalForms}. The Laplace--Beltrami operator in the context of Riemannian manifolds is discussed in \Cref{section:LapMfld} laying the fundament for the formulation of the main result in \Cref{thm:main:result} stated in \Cref{section:mainResult}. Sections \ref{section:bias} and \ref{section:variance} are dedicated to the proof of \Cref{thm:main:result}, and Section \ref{section:experiments} implements numerical experiments.

\subsection*{Basic notation}
For a natural number $n\geq 1$ the symmetric group on $n$ elements is denoted by $S_n$. $I_n$ denotes the $n\times n$ identity matrix. For a subset $J\subseteq \{1,\dots,n\}$, we denote by $J^{\complement}$ the complement of $J$ in $\{1,\dots,n\}$. For $q\geq 1$ and a real-valued random variable $X$ on a probability space $(\Omega,\mathcal{F},\mathbb{P})$ we write $\|X\|_{L^q}=\mathbb{E}^{1/q}|X|^q$ for the $L^q$ norm. Similarly, for $q=\infty$ we write $\|X\|_{L^\infty}$ for the (essential) supremum norm. Throughout the paper, $C>0$ denotes a constant that may change from line to line (by a numerical value).

\section{Hodge Laplacians on graphs}
\label{section:HodgeLaplacian}
Hodge Laplacians on graphs are higher-order generalizations of graph Laplacians. They can be interpreted as discrete analogous of Hodge theory on Riemannian manifolds, and they have been first introduced in the context of simplicial complexes. In this section, we summarize some basic elements and formulas of this theory in a form suitable for our study. Similar treatments can be found in \cite{Smale12,Lim20}.

Let $V=\{X_1,\dots,X_n\}$ be a finite set of data points (in a Euclidean space). We call a function $\boldsymbol{\omega}:V^{\ell+1}\rightarrow \mathbb{R}$ an $\ell$-form if it is alternating, that is 
\begin{align*}
    \boldsymbol{\omega}(X_{i_{\sigma(0)}},\dots,X_{i_{\sigma(\ell)}})=\operatorname{sgn}(\sigma)\boldsymbol{\omega}(X_{i_{0}},\dots,X_{i_{\ell}})
\end{align*}
for all $i_0,\dots,i_{\ell}\in\{1,\dots,n\}$ and all $\sigma\in S_{\ell+1}$. Given positive and symmetric weights $(k_{i_0\cdots i_{\ell}})$, we denote by $L^2_{\wedge}(V^{\ell+1})$ the Hilbert space of all $\ell$-forms endowed with the inner product 
\begin{align} \label{eq:def:discreteInnerProd}
    \langle \boldsymbol{\omega},\boldsymbol{\eta}\rangle_n= \tfrac{1}{(\ell +1)!}\sum_{i_0,\dots,i_\ell=1}^nk_{i_0\cdots i_\ell} \boldsymbol{\omega}(X_{i_0},\dots, X_{i_\ell})\boldsymbol{\eta}(X_{i_0},\dots, X_{i_\ell}).
\end{align}
Using the alternating property and the symmetry of the weights, we can also write 
\begin{align*} 
\langle \boldsymbol{\omega},\boldsymbol{\eta}\rangle_n= \sum_{1\leq i_0<\dots<i_\ell\leq n}k_{i_0\cdots i_\ell} \boldsymbol{\omega}(X_{i_0},\dots, X_{i_\ell})\boldsymbol{\eta}(X_{i_0},\dots, X_{i_\ell}).
\end{align*}
Note that the results of this section are also true if the weights $(k_{i_0\cdots i_{\ell}})$ are non-negative. In this case, we require the additional property that $k_{i_0\cdots i_{\ell}}\neq 0$ implies that $k_{i_1\cdots i_{\ell}}\neq 0$ for all $i_0,\ldots, i_{\ell}\in\{1,\dots,n\}$ and all $\ell\geq 1$ (often called downward-closed property), and $L^2_{\wedge}(V^{\ell+1})$ is understood as the Hilbert space of functions that are zero for $\ell$-tuples $(X_{i_0},\dots, X_{i_\ell})$ such that $k_{i_0\cdots i_{\ell}}=0$. Moreover, we introduce the $\ell$-coboundary operator $\delta_\ell:L^2_{\wedge}(V^{\ell+1})\rightarrow L^2_{\wedge}(V^{\ell+2})$ defined by
        \begin{align*}
            (\delta_\ell\boldsymbol{\omega}) (X_{i_0},\dots,X_{i_{\ell+1}})=\sum_{j=0}^{\ell+1}(-1)^j\boldsymbol{\omega}(X_{i_0},\dots,\widehat{X_{i_j}},\dots,X_{i_{\ell+1}}),    
        \end{align*}
where $\widehat{X_{i_j}}$ means that $X_{i_j}$ is omitted. The above information can be summarized in the cochain complex
\begin{equation} \label{eq:discreteComplex}
  \xymatrix{
0 \ar[r] & L^2(V) \ar[r]^{\delta_0} & L^2_{\wedge}(V^{2}) \ar[r]^-{\delta_1} & \cdots \ar[r]^-{\delta_{\ell-1}} & L^2_{\wedge}(V^{\ell+1})  \ar[r]^-{\delta_{\ell}} & \cdots
}  
\end{equation}
satisfying $\delta_\ell\circ \delta_{\ell-1}=0$ for every $\ell\geq 1$ (see Theorem 5.7 in \cite{Lim20}). If $\ell$ is clear from the context, we will also omit the subscript and write $\delta$ instead of $\delta_\ell$. For a function $\mathbf{f}\in L^2(V)$, we will e.g.~often abbreviate $\delta_0 \mathbf{f}$ to $\delta \mathbf{f}$. Let $\delta_{\ell}^*:L^2_{\wedge}(V^{\ell+2})\rightarrow L^2_{\wedge}(V^{\ell+1})$ be the adjoint of $\delta_{\ell}$ defined by the identity $\langle \delta_{\ell}^*\boldsymbol{\omega},\boldsymbol{\eta}\rangle_n=\langle \boldsymbol{\omega},\delta_{\ell}\boldsymbol{\eta}\rangle_n$, valid for all $(\ell+1)$-forms $\boldsymbol{\omega}$ and all $\ell$-forms $\boldsymbol{\eta}$. Explicitly, an elementary computation leads to (compare to \cite{Smale12})
\begin{align}\label{eq:adjoint:delta}
    (\delta_{\ell}^*\boldsymbol{\omega})(X_{i_0},\dots,X_{i_\ell})=\sum_{j=1}^n \frac{k_{ji_0\cdots i_\ell}}{k_{i_0\cdots i_\ell}}\boldsymbol{\omega}(X_{j},X_{i_0},\dots, X_{i_\ell}).
\end{align}
Finally, we define the up and down Hodge Laplacian by 
    \begin{align*}
         \mathscr{L}_\ell^{\up}=\delta_{\ell}^*\delta_{\ell},\qquad \mathscr{L}_\ell^{\down}=\delta_{\ell-1}\delta_{\ell-1}^* 
    \end{align*}
    for $ \ell\geq 1$, as well as the full Hodge Laplacian by $$\mathscr{L}_0=\mathscr{L}_0^{\up}=\delta_0^*\delta_0$$ and, for $\ell\geq 1$,
    \begin{align*}
    \mathscr{L}_\ell=\mathscr{L}_\ell^{\up}+\mathscr{L}_\ell^{\down}=\delta_{\ell}^*\delta_{\ell}+\delta_{\ell-1}\delta_{\ell-1}^*.
    \end{align*}
    \begin{example}[Graph Laplacians]
        \label{ex.graph.laplacian}
        Consider the case $\ell=0$. Suppose that $K=(k_{ij})\in\mathbb{R}^{n\times n}$ is a symmetric matrix with non-negative entries such that the so-called degree matrix $D=\operatorname{diag}(d_1,\dots,d_n)$, $d_i=\sum_{j=1}^nk_{ij}$ is non-singular and such that the downward-closed property holds. Then 
        \begin{align*}
            (\delta_0\mathbf{f})(X_i,X_j)=\mathbf{f}(X_j)-\mathbf{f}(X_i),\qquad \mathbf{f}\in L^2(V),
        \end{align*}
        is a discrete version of the gradient and $\delta_0^*$ is given by
        \begin{align*}
            (\delta_0^*\boldsymbol{\omega})(X_i)=\sum_{j=1}^n\frac{k_{ij}}{k_i}\boldsymbol{\omega}(X_j,X_i),\qquad \boldsymbol{\omega}\in L_\wedge^2(V^2).
        \end{align*}
        Hence, if $k_1=\dots=k_n = 1$, then
        \begin{align*}
            (\mathscr{L}_0\mathbf{f})(X_i)=\sum_{j=1}^nk_{ij}(\mathbf{f}(X_i)-\mathbf{f}(X_j)),
        \end{align*}
        meaning that $\mathscr{L}_0=D-K$ if we identify $\mathbf{f}\in L^2(V)$ with the vector $(\mathbf{f}(X_1),\dots,\mathbf{f}(X_n))^\top\in\mathbb{R}^n$ and $\mathscr{L}_0$ with the associated matrix representation in $\mathbb{R}^{n\times n}$. Moreover, if $k_i=d_i$ for all $i=1,\dots,n$, then 
        \begin{align*}
            (\mathscr{L}_0\mathbf{f})(X_i)=\sum_{j=1}^n\frac{k_{ij}}{k_i}(\mathbf{f}(X_i)-\mathbf{f}(X_j)),
        \end{align*}
        meaning that $\mathscr{L}_0=I_n-D^{-1}K$ with the identification above. As a result, in these two cases, $\mathscr{L}_0$ coincides with the unnormalized graph Laplacian and the random walk graph Laplacian, respectively \cite{Chung97,vLux07}.
    \end{example}

    \begin{example}[Graph Helmholtzian] 
        \label{ex.graph.helmholtzian}
        Inverting the order in which we compose $\delta_0$ and $\delta_0^*$ in Example \ref{ex.graph.laplacian} leads to 
        \begin{align*}
            (\mathscr{L}_{1}^{\down}\boldsymbol{\omega})(X_i, X_j) &= \sum_{m=1}^n \frac{k_{mj}}{k_{j}}\boldsymbol{\omega}(X_m, X_{j}) - \sum_{m=1}^n \frac{k_{mi}}{k_{i}}\boldsymbol{\omega}(X_m, X_{i})
        \end{align*}
        for $\boldsymbol{\omega} \in L^2_{\wedge}(V^2)$. The above is identical to \cite[Eqn.~(2.7)]{eidi2023irreducibility}. Moreover, the coboundary operator $\delta_1$ is given by
        \begin{align}\label{eq:Helm:up}
            (\delta_1\boldsymbol{\omega})(X_{i}, X_{j}, X_{k}) = \boldsymbol{\omega}(X_{j}, X_{k}) - \boldsymbol{\omega}(X_{i}, X_{k}) + \boldsymbol{\omega}(X_{i}, X_{j})
        \end{align}
        for $\boldsymbol{\omega} \in L^2_{\wedge}(V^2)$.  Combining this with the formula \eqref{eq:adjoint:delta} for its adjoint, we get
        \begin{align*}
            (\mathscr{L}_1^{\up}\boldsymbol{\omega})(X_{i}, X_{j}) &= \sum_{m =1}^n\frac{k_{m i j}}{k_{i j}} \big(\boldsymbol{\omega}(X_{i}, X_{j}) - \boldsymbol{\omega}(X_m, X_{j}) + \boldsymbol{\omega}(X_m, X_{i})\big),
        \end{align*}
        which corresponds to \cite[Eqn.\ (2.6)]{eidi2023irreducibility} and the up Hodge Laplacian identity on weighted simplicial complexes in \cite{Horak13}.  Depending on the choice of the weights, the up and down Hodge Laplacians are connected to a random walk on the weighted simplicial complex. For instance, if $k_{ij} = \sum_{m =1}^n k_{mij}$ for all $i\neq j$, then 
         \begin{align*}
        \frac{1}{2}((I - \mathscr{L}_1^{\up})\boldsymbol{\omega})(X_i, X_j) = \sum_{m = 1}^{n}\frac{k_{mij}}{k_{ij}}\bigg(\frac 12 \boldsymbol{\omega}(X_m, X_j) + \frac 12\boldsymbol{\omega}(X_i, X_m)\bigg),
    \end{align*}
    so $(I - \mathscr{L}_1^{\up})/2$ can be interpreted as a transition operator of a Markov chain \cite{MR3607124}, \cite{Mukherjee16}.
    \end{example}    
    The operators $\mathscr{L}_\ell$, $\ell\geq 0$ are by construction self-adjoint and positive semi-definite and thus have real and non-negative eigenvalues. 
    These eigenvalues contain topological information about the underlying graph. For instance, it is well-known that the multiplicities of the eigenvalue zero of the unnormalized graph Laplacian $D-K$ (that is $\dim(\ker(D-K))$) from the above example is equal to the number of connected components of the weighted graph $(V,K)$. 
    Note that  $\dim(\ker(D-K))=1$ if all weights are non-zero, while it might be strictly larger for non-negative weights. 
    Moreover, the first nonzero eigenvalue is related to the Cheeger constant and satisfies the so-called Cheeger inequality. For more details see \cite{Chung97,Lev+17}. 
    Similar statements for $\ell\geq 1$ are encoded in the so-called Hodge decomposition, which can be deduced from $\delta_\ell\circ \delta_{\ell-1}=0$ and results from linear algebra in our finite-dimensional setting (see Section~4.3 in \cite{Lim20}). 
    First, $\ker(\mathscr{L}_\ell)=\ker(\delta_\ell)\cap \ker(\delta_{\ell-1}^*)$ and thus $\operatorname{im}(\mathscr{L}_\ell)=\operatorname{im}(\delta_\ell^*)\oplus\operatorname{im}(\delta_{\ell-1})$. 
    In particular, we have 
    \begin{align*}
        L^2_\wedge(V^{\ell+1})=\overunderbraces{&&\br{2}{\ker(\delta_{\ell})}}
        {&\operatorname{im}(\delta_{\ell}^*) \oplus &  \ker(\mathscr{L}_\ell) &\oplus\operatorname{im}(\delta_{\ell-1})}
        {&\br{2}{\ker(\delta_{\ell-1}^*)}}.
    \end{align*}
    Thus the $\ell$-th cohomology group $\ker(\delta_\ell)/\operatorname{im}(\delta_{\ell-1})$ is isomorphic to $\ker(\mathscr{L}_\ell)$. The quantity $\dim(\ker(\mathscr{L}_\ell))$ is also called the $\ell$-th Betti number. 
    Moreover, the set of nonzero eigenvalues (counted with multiplicities) of $\mathscr{L}_\ell$ is equal to the union of the nonzero eigenvalues of $\mathscr{L}_\ell^{\up}$ and the nonzero eigenvalues of $\mathscr{L}_\ell^{\down}$. 
    Since the nonzero eigenvalues of $\mathscr{L}_\ell^{\down}$ are equal to the nonzero eigenvalues of $\mathscr{L}_{\ell-1}^{\up}$, it suffices to focus on $(\mathscr{L}_\ell^{\up})_{\ell\geq 0}$ when studying the eigenvalues of all $(\mathscr{L}_\ell)_{\ell\geq 0}$. 
    By the min-max characterization of eigenvalues, it is thus an important first step to study the quadratic form $\langle\boldsymbol{\omega},\mathscr{L}_{\ell-1}^{\up}\boldsymbol{\omega}\rangle_n$, which will be the main focus of this paper.

    \section{Empirical $\ell$-forms}
    \label{section:empiricalForms}
    In this section, we endow the spaces $L^2_\wedge(V^{\ell+1})$, $\ell\geq 0$ with an additional wedge product $\wedge$. This will strengthen the analogy to differential $\ell$-forms on manifolds, and it will allow us to define certain $\ell$-forms that are characterized by functions only. Similar results can be found in \cite{HK24}, where a tensor product formulation for general non-local differential complexes was introduced. Classical background may for instance be found in~\cite{War83,MT02}. 
    
    For $\boldsymbol{\omega}\in L^2_\wedge(V^{\ell+1})$ and $\boldsymbol{\eta}\in L^2_\wedge(V^{m+1})$, we define $\boldsymbol{\omega}\wedge \boldsymbol{\eta}\in L^2_\wedge(V^{\ell+m+1})$ by 
    \begin{align} \label{eq:def:wedgeProduct}
        \begin{split}
        &(\boldsymbol{\omega}\wedge \boldsymbol{\eta})(X_{i_0},\dots, X_{i_{\ell+m}})\\
        &=\frac{1}{(\ell+m+1)!}\sum_{\sigma\in S_{\ell+m+1}}\operatorname{sgn}(\sigma)\boldsymbol{\omega}(X_{i_{\sigma(0)}},\dots, X_{i_{\sigma(\ell)}})\boldsymbol{\eta}(X_{i_{\sigma(\ell)}},\dots, X_{i_{\sigma(\ell+m)}}).
        \end{split}
    \end{align}
    In what follows, we collect some basic properties of the wedge product.

    \begin{lemma} If $\mathbf{f}$ is a $0$-form and $\boldsymbol{\omega}$ is an $\ell$-form, then 
    \begin{align*}
        (\mathbf{f} \boldsymbol{\omega})(X_{i_0},\dots, X_{i_{\ell}}):=(\mathbf{f}\wedge \boldsymbol{\omega})(X_{i_0},\dots, X_{i_{\ell}})=\frac{f(X_{i_0})+\dots +f(X_{i_\ell})}{\ell+1}\boldsymbol{\omega}(X_{i_0},\dots, X_{i_{\ell}}).
    \end{align*}
    \end{lemma}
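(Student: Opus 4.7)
The plan is to prove this by direct substitution into the definition \eqref{eq:def:wedgeProduct}, then exploit the alternating property of $\boldsymbol{\omega}$ to reduce the sum over permutations to a simple counting argument. First I would apply \eqref{eq:def:wedgeProduct} with the first factor being the $0$-form $\mathbf{f}$ (so the parameter $\ell$ in the definition is $0$) and the second factor being the $\ell$-form $\boldsymbol{\omega}$ (so the parameter $m$ in the definition is $\ell$), which yields
\begin{align*}
(\mathbf{f}\wedge \boldsymbol{\omega})(X_{i_0},\dots, X_{i_{\ell}})
=\frac{1}{(\ell+1)!}\sum_{\sigma\in S_{\ell+1}}\operatorname{sgn}(\sigma)\, \mathbf{f}(X_{i_{\sigma(0)}})\, \boldsymbol{\omega}(X_{i_{\sigma(0)}},\dots, X_{i_{\sigma(\ell)}}).
\end{align*}

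Next, I would pull the sign out using the alternating property of $\boldsymbol{\omega}$, namely $\operatorname{sgn}(\sigma)\,\boldsymbol{\omega}(X_{i_{\sigma(0)}},\dots, X_{i_{\sigma(\ell)}})=\boldsymbol{\omega}(X_{i_{0}},\dots, X_{i_{\ell}})$, which makes the right-hand side factor as
\begin{align*}
\frac{\boldsymbol{\omega}(X_{i_{0}},\dots, X_{i_{\ell}})}{(\ell+1)!}\sum_{\sigma\in S_{\ell+1}}\mathbf{f}(X_{i_{\sigma(0)}}).
\end{align*}
The remaining step is to evaluate the sum: for each fixed value $j\in\{0,\dots,\ell\}$, the set of permutations $\sigma\in S_{\ell+1}$ with $\sigma(0)=j$ has cardinality $\ell!$, so that $\sum_{\sigma\in S_{\ell+1}}\mathbf{f}(X_{i_{\sigma(0)}})=\ell!\sum_{j=0}^{\ell}\mathbf{f}(X_{i_j})$. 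Combining everything and simplifying $\ell!/(\ell+1)!=1/(\ell+1)$ yields the asserted formula.

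This is essentially a one-shot calculation, so I do not anticipate a genuine obstacle; the only thing requiring care is the somewhat unusual repeated index at position $\sigma(\ell)$ in the definition \eqref{eq:def:wedgeProduct}, which in the $0$-form case collapses to a single index $\sigma(0)$ appearing in both factors. Once that bookkeeping is handled correctly, the alternating property and the elementary counting $|\{\sigma:\sigma(0)=j\}|=\ell!$ deliver the result with no further input.
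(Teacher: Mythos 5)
Your proposal is correct and follows exactly the paper's own argument: substitute into the definition of the wedge product, use the alternating property to replace $\operatorname{sgn}(\sigma)\,\boldsymbol{\omega}(X_{i_{\sigma(0)}},\dots,X_{i_{\sigma(\ell)}})$ by $\boldsymbol{\omega}(X_{i_0},\dots,X_{i_\ell})$, and count the $\ell!$ permutations with $\sigma(0)=a$ for each $a$. No gaps.
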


    \begin{proof}
        For each $a=0,\dots,\ell$, there are $\ell!$ permutations $\sigma$ on $\{0,\dots,\ell\}$ with $\sigma(0)=a$. Hence,
        \begin{align*}
            (\mathbf{f}\wedge \boldsymbol{\omega})(X_{i_0},\dots, X_{i_{\ell}})&=\frac{1}{(\ell+1)!}\sum_{\sigma\in S_{\ell+1}}\mathbf{f}(X_{i_{\sigma(0)}})\boldsymbol{\omega}(X_{i_{0}},\dots, X_{i_{\ell}})\\
            &=\Big(\frac{1}{\ell+1}\sum_{a=0}^\ell \mathbf{f}(X_{i_a})\Big)\boldsymbol{\omega}(X_{i_{0}},\dots, X_{i_{\ell}}),
        \end{align*}
        where we used the alternating property in the first equality.
    \end{proof}

    A variant of the following lemma has also been shown in Proposition 3.2 in \cite{HK24}. Here we give a slightly different argument based on the Leibniz rule for the wedge product.

    \begin{lemma}\label{lemma:derivative:empirical:l:form}
        Let $\mathbf{f}_1,\dots,\mathbf{f}_\ell\in L^2(V)$ and $\boldsymbol{\omega}=\mathbf{f}_1(\delta_0\mathbf{f}_2\wedge\dots\wedge \delta_0\mathbf{f}_\ell)\in L_\wedge^2(V^\ell)$. Then we have 
        \[ \delta_{\ell-1}\boldsymbol{\omega}=\delta_{\ell-1} \left( \mathbf{f}_1(\delta_0\mathbf{f}_2\wedge\dots\wedge \delta_0\mathbf{f}_\ell)\right) =\delta_0\mathbf{f}_1\wedge\dots\wedge \delta_0\mathbf{f}_\ell.\]
    \end{lemma}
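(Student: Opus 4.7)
The plan is to reduce the statement to two ingredients: a Leibniz rule for the pair $(\delta,\wedge)$ on empirical forms, and the cochain property $\delta\circ\delta=0$ already recorded after \eqref{eq:discreteComplex}.

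First I would establish (or invoke from, e.g., \cite{HK24} and the discussion at the start of this section) the graded Leibniz rule: for a $p$-form $\boldsymbol{\alpha}$ and a $q$-form $\boldsymbol{\beta}$,
\[
\delta_{p+q}(\boldsymbol{\alpha}\wedge \boldsymbol{\beta}) = (\delta_{p-1}\boldsymbol{\alpha})\wedge \boldsymbol{\beta} + (-1)^{p}\, \boldsymbol{\alpha}\wedge (\delta_{q-1}\boldsymbol{\beta}),
\]
with the convention that the $0$-form case $p=0$ reduces to $\delta(\mathbf{f}\wedge \boldsymbol{\beta}) = \delta_0\mathbf{f}\wedge \boldsymbol{\beta} + \mathbf{f}\wedge \delta\boldsymbol{\beta}$. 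If a self-contained verification is desired, one expands both sides via \eqref{eq:def:wedgeProduct} and the formula for $\delta$, and matches terms by grouping permutations in $S_{p+q+1}$ according to whether the inserted index lands in the ``$\boldsymbol{\alpha}$ block'' or the ``$\boldsymbol{\beta} $ block''; the sign $(-1)^p$ arises from the shuffle needed to move the omitted index past the first $p$ arguments.

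Second, I would prove by induction on $m\geq 1$ that
\[
\delta\bigl(\delta_0\mathbf{f}_2 \wedge \cdots \wedge \delta_0 \mathbf{f}_{m+1}\bigr) = 0.
\]
The base case $m=1$ is exactly $\delta_1\circ\delta_0=0$. For the inductive step, applying the Leibniz rule with $\boldsymbol{\alpha}=\delta_0\mathbf{f}_2$ (a $1$-form) and $\boldsymbol{\beta}=\delta_0\mathbf{f}_3\wedge\cdots\wedge \delta_0\mathbf{f}_{m+1}$ gives two terms, both of which vanish: the first by $\delta\circ\delta=0$, the second by the induction hypothesis.

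Finally, applying the Leibniz rule to $\boldsymbol{\omega}=\mathbf{f}_1\wedge (\delta_0\mathbf{f}_2\wedge\cdots\wedge \delta_0\mathbf{f}_\ell)$ with $p=0$ yields
\[
\delta_{\ell-1}\boldsymbol{\omega} = \delta_0\mathbf{f}_1\wedge (\delta_0\mathbf{f}_2\wedge\cdots\wedge \delta_0\mathbf{f}_\ell) + \mathbf{f}_1\wedge \delta(\delta_0\mathbf{f}_2\wedge\cdots\wedge \delta_0\mathbf{f}_\ell),
\]
and the second term is $0$ by the previous step, yielding the claim (using associativity of $\wedge$, which I would note separately or invoke from the references).

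The main obstacle is the Leibniz rule itself: the combinatorial bookkeeping in verifying it directly from \eqref{eq:def:wedgeProduct} is delicate because of the symmetrization over all of $S_{p+q+1}$ rather than over shuffles, so one must be careful with the normalization factors $1/(p+q+1)!$ and track how the omitted-index sum in $\delta$ interacts with the sum over permutations. Once this is in hand, the rest of the argument is a clean two-line induction.
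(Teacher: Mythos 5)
Your overall architecture matches the paper's: both arguments rest on a Leibniz rule for the pair $(\delta,\wedge)$, the cochain property $\delta\circ\delta=0$, and an induction. The gap is that the Leibniz rule is not an ingredient you can simply invoke --- it is the entire mathematical content of this lemma, and the paper's proof consists precisely of the combinatorial verification you defer (you concede this yourself: ``the main obstacle is the Leibniz rule itself''). Worse, your route needs strictly more than the paper establishes: in your second step you apply the graded rule with $\boldsymbol{\alpha}=\delta_0\mathbf{f}_2$ a $1$-form, so you need the case $p=1$. For the overlapping-argument product \eqref{eq:def:wedgeProduct}, where the last slot of $\boldsymbol{\alpha}$ is reused as the first slot of $\boldsymbol{\beta}$, the $p\geq 1$ case is genuinely more delicate than $p=0$, and the correctness of the sign $(-1)^p$ and of the $1/(p+q+1)!$ normalization is exactly what would have to be checked. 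The paper avoids this entirely: it proves only the $0$-form Leibniz rule $\delta(\mathbf{f}_1\wedge\boldsymbol{\eta})=\delta_0\mathbf{f}_1\wedge\boldsymbol{\eta}+\mathbf{f}_1\,\delta\boldsymbol{\eta}$ by a direct expansion (grouping permutations essentially as you suggest, via the substitution $\sigma\mapsto\sigma\circ(0,1,\dots,a)$), and then arranges the induction so that $\delta\boldsymbol{\eta}=0$ comes for free: by the induction hypothesis, $\boldsymbol{\eta}=\delta_0\mathbf{f}_2\wedge\cdots\wedge\delta_0\mathbf{f}_\ell$ is itself $\delta_{\ell-2}$ of an empirical form, hence closed by $\delta_{\ell-1}\circ\delta_{\ell-2}=0$.

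The fix requires only a small rearrangement: replace your second step (closedness of wedges of exact $1$-forms via the $p=1$ Leibniz rule) by the observation that the induction hypothesis already exhibits $\boldsymbol{\eta}$ as exact, so closedness follows from $\delta\circ\delta=0$ alone; then you only need the $p=0$ Leibniz rule, for which you must supply the full expansion-and-matching computation rather than a sketch. You should also record (or reduce to a determinant identity as in Lemma \ref{lemma:derivative:empirical:l:form:det}) the associativity implicit in writing $\delta_0\mathbf{f}_1\wedge\cdots\wedge\delta_0\mathbf{f}_\ell$ without parentheses, as you note in passing.
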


    \begin{proof}
        Set $\boldsymbol{\eta}=\delta_0\mathbf{f}_2\wedge\dots\wedge \delta_0\mathbf{f}_\ell$. Then we have $\boldsymbol{\omega}=\mathbf{f}_1\wedge \boldsymbol{\eta}$ and by definition 
        \begin{align}\label{eq:lhs:derivative:form}
            &(\delta_{\ell-1}\boldsymbol{\omega})(X_{i_0},\dots,X_{i_\ell})\\
            &=\sum_{a=0}^{\ell}(-1)^a(\mathbf{f}_1\wedge \boldsymbol{\eta})(X_{i_0},\dots,\widehat{X_{i_a}},\dots,X_{i_{\ell}})\nonumber\\
            &=\frac{1}{\ell!}\sum_{a=0}^\ell\sum_{\substack{\sigma\in S_{\ell+1}\\ \sigma(a)=a}}(-1)^a \operatorname{sgn}(\sigma)\mathbf{f}_1(X_{i_{\sigma(0)}})\boldsymbol{\eta}(X_{i_{\sigma(0)}},\dots,\widehat{X_{i_{\sigma(a)}}},\dots,X_{i_{\sigma(\ell)}}),\nonumber
        \end{align}
        where the second equality follows from the definition of $\wedge$ and the fact that all permutations $\sigma\in S_{\ell+1}$ with $\sigma(a)=a$ are in bijection to all permutations on $\{0,\dots,\ell\}\setminus \{a\}$.  
        On the other hand, we have
        \begin{align*}
            &(\delta_0\mathbf{f}_1\wedge \boldsymbol{\eta}+\mathbf{f}_1\delta_{\ell-1}\boldsymbol{\eta})(X_{i_{0}},\dots, X_{i_{\ell}})\\
            &=\frac{1}{(\ell+1)!}\sum_{\sigma\in S_{\ell+1}}\operatorname{sgn}(\sigma)(\mathbf{f}_1(X_{i_{\sigma(1)}})-\mathbf{f}_1(X_{i_{\sigma(0)}}))\boldsymbol{\eta}(X_{i_{\sigma(1)}},\dots, X_{i_{\sigma(\ell)}})\\
            &+\frac{1}{(\ell+1)!}\sum_{\sigma\in S_{\ell+1}}\operatorname{sgn}(\sigma)\mathbf{f}_1(X_{i_{\sigma(0)}})\sum_{a=0}^{\ell}(-1)^a\boldsymbol{\eta}(X_{i_{\sigma(0)}},\dots,\widehat{X_{i_{\sigma(a)}}},\dots,X_{i_{\sigma(\ell)}})\\
            &=\frac{1}{(\ell+1)!}\sum_{\sigma\in S_{\ell+1}}\operatorname{sgn}(\sigma)\mathbf{f}_1(X_{i_{\sigma(1)}})\boldsymbol{\eta}(X_{i_{\sigma(1)}},\dots, X_{i_{\sigma(\ell)}})\\
            &+\frac{1}{(\ell+1)!}\sum_{\sigma\in S_{\ell+1}}\operatorname{sgn}(\sigma)\mathbf{f}_1(X_{i_{\sigma(0)}})\sum_{a=1}^{\ell}(-1)^a\boldsymbol{\eta}(X_{i_{\sigma(0)}},\dots,\widehat{X_{i_{\sigma(a)}}},\dots,X_{i_{\sigma(\ell)}}).
        \end{align*}
        In the first equality we transform the second term $\mathbf{f}_1\delta_{\ell-1}\boldsymbol{\eta}$, making use of the alternating property. For the second equality we observe that the terms for $a=0$ in the second sum cancel with the negative part of the first sum. Substituting $\sigma$ by $\sigma\circ (0,1,\dots,a)$, we arrive at
        \begin{align}
            &(\delta_0\mathbf{f}_1\wedge \boldsymbol{\eta}+\mathbf{f}_1\delta_{\ell-1}\boldsymbol{\eta})(X_{i_{0}},\dots, X_{i_{\ell}})\label{eq:rhs:derivative:form}\\
            &=(\ell+1)\frac{1}{(\ell+1)!}\sum_{\sigma\in S_{\ell+1}}\operatorname{sgn}(\sigma)\mathbf{f}_1(X_{i_{\sigma(1)}})\boldsymbol{\eta}(X_{i_{\sigma(1)}},\dots, X_{i_{\sigma(\ell)}})\nonumber\\
            &=\frac{1}{\ell!}\sum_{\substack{\sigma\in S_{\ell+1}\\\sigma(0)=0}}\operatorname{sgn}(\sigma)\mathbf{f}_1(X_{i_{\sigma(1)}})\boldsymbol{\eta}(X_{i_{\sigma(1)}},\dots, X_{i_{\sigma(\ell)}})\nonumber\\
            &+\frac{1}{\ell!}\sum_{a=1}^\ell\sum_{\substack{\sigma\in S_{\ell+1}\\\sigma(0)=a}}\operatorname{sgn}(\sigma)\mathbf{f}_1(X_{i_{\sigma(1)}})\boldsymbol{\eta}(X_{i_{\sigma(1)}},\dots, X_{i_{\sigma(\ell)}})\nonumber\\
            &=\frac{1}{\ell!}\sum_{\substack{\sigma\in S_{\ell+1}\\\sigma(0)=0}}\operatorname{sgn}(\sigma)\mathbf{f}_1(X_{i_{\sigma(1)}})\boldsymbol{\eta}(X_{i_{\sigma(1)}},\dots, X_{i_{\sigma(\ell)}})\nonumber\\
            &+\frac{1}{\ell!}\sum_{a=1}^\ell\sum_{\substack{\sigma\in S_{\ell+1}\\\sigma(0)=a}}(-1)^a\operatorname{sgn}(\sigma)\mathbf{f}_1(X_{i_{\sigma(0)}})\boldsymbol{\eta}(X_{i_{\sigma(0)}},\dots,\widehat{X_{i_{\sigma(a)}}},\dots,X_{i_{\sigma(\ell)}}),\nonumber
        \end{align}
        where the last equality follows from substituting again $\sigma$ by $\sigma\circ (0,1,\dots,a)$. Combining \eqref{eq:lhs:derivative:form} and \eqref{eq:rhs:derivative:form}, we arrive at the Leibniz rule \cite{War83} 
        \begin{align*}
            \delta_{\ell-1}\boldsymbol{\omega}=\delta_0\mathbf{f}_1\wedge \boldsymbol{\eta}&+\mathbf{f}_1\delta_{\ell-1}\boldsymbol{\eta}, 
        \end{align*}
        from which the claim follows by induction. Indeed, for $\ell=2$ we have $\delta_1(\mathbf{f}_1 \delta_0\mathbf{f}_2)=\delta_0\mathbf{f}_1\wedge \delta_0\mathbf{f}_2$ since $\delta_1\circ \delta_0=0$, and in the induction step we use $\delta_{\ell-1}\boldsymbol{\eta}=0$ since $\delta_{\ell-1}\circ \delta_{\ell-2}=0$. This completes the proof.
    \end{proof}
    The following lemma is a variant of formula (11) in \cite{HK24}.

    \begin{lemma}\label{lemma:derivative:empirical:l:form:det}
        If $\mathbf{f}_1,\dots,\mathbf{f}_\ell\in L^2(V)$, then 
        \begin{align*}
            (\delta_0\mathbf{f}_1\wedge\dots\wedge \delta_0\mathbf{f}_\ell)(X_{i_{0}},\dots, X_{i_{\ell}})=\frac{1}{\ell!}\det_{\ell\times\ell}\big( \delta_0 \mathbf{f}_a(X_{i_0},X_{i_b})\big)_{a,b}.
        \end{align*}
    \end{lemma}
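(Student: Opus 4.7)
The proof proceeds by induction on $\ell$, with the base case $\ell=1$ being immediate since both sides reduce to $\delta_0\mathbf{f}_1(X_{i_0},X_{i_1})$. For the inductive step ($\ell\geq 2$), I would invoke \Cref{lemma:derivative:empirical:l:form} to rewrite
\[
  \delta_0\mathbf{f}_1\wedge\dots\wedge\delta_0\mathbf{f}_\ell \;=\; \delta_{\ell-1}(\mathbf{f}_1\,\boldsymbol{\eta}),\qquad \boldsymbol{\eta}:=\delta_0\mathbf{f}_2\wedge\dots\wedge\delta_0\mathbf{f}_\ell,
\]
and then expand the right-hand side by plugging in the explicit definition of the coboundary $\delta_{\ell-1}$ together with the scalar-times-form identity from the first lemma of this section.

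Writing $F_k:=\mathbf{f}_1(X_{i_k})$, $F:=\sum_k F_k$, and $\boldsymbol{\eta}_j:=\boldsymbol{\eta}(X_{i_0},\dots,\widehat{X_{i_j}},\dots,X_{i_\ell})$, the expansion splits into
\[
  \frac{F}{\ell}\sum_{j=0}^\ell (-1)^j\boldsymbol{\eta}_j \;-\; \frac{1}{\ell}\sum_{j=0}^\ell (-1)^j F_j\,\boldsymbol{\eta}_j.
\]
The first term equals $(F/\ell)\,(\delta_{\ell-1}\boldsymbol{\eta})(X_{i_0},\dots,X_{i_\ell})$ and hence vanishes, since $\boldsymbol{\eta}$ is closed: either directly from $\delta_1\circ\delta_0=0$ when $\ell=2$, or from $\delta_{\ell-1}\circ\delta_{\ell-2}=0$ for $\ell\geq 3$ (applied after another use of \Cref{lemma:derivative:empirical:l:form} to place $\boldsymbol{\eta}\in\operatorname{im}\delta_{\ell-2}$).

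It remains to match the surviving sum $-\tfrac{1}{\ell}\sum_j(-1)^jF_j\boldsymbol{\eta}_j$ with $\det(M)/\ell!$, where $M=(\mathbf{f}_a(X_{i_b})-\mathbf{f}_a(X_{i_0}))_{a,b=1,\dots,\ell}$. For $j\geq 1$, the inductive hypothesis directly yields $(\ell-1)!\,\boldsymbol{\eta}_j=\det(M^{1,j})$, the $(1,j)$-minor of $M$. The delicate point is $j=0$, where the inductive formula for $\boldsymbol{\eta}_0=\boldsymbol{\eta}(X_{i_1},\dots,X_{i_\ell})$ is expressed using $X_{i_1}$ rather than $X_{i_0}$ as reference point. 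The bridge is the identity
\[
  (\ell-1)!\,\boldsymbol{\eta}_0 \;=\; \sum_{b=1}^\ell (-1)^{b+1}\det(M^{1,b}),
\]
obtained by introducing the auxiliary $\ell\times\ell$ matrix $N$ whose first row is $(1,\dots,1)$ and whose remaining rows coincide with those of $M$: Laplace expansion along the first row of $N$ gives the right-hand side, while subtracting column~$1$ from each other column and then expanding along the resulting row $(1,0,\dots,0)$ gives $(\ell-1)!\,\boldsymbol{\eta}_0$. Substituting these identifications collects the terms $(F_b-F_0)\det(M^{1,b})$, which is precisely the Laplace expansion of $\det(M)$ along its first row.

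The main obstacle I foresee is the index shift at $j=0$, which blocks direct application of the inductive hypothesis; once the column-subtraction identity linking the two reference points $X_{i_0}$ and $X_{i_1}$ is in hand, the remainder is bookkeeping around the Laplace expansion.
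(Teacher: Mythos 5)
Your proof is correct, but it follows a genuinely different route from the paper's. The paper never invokes \Cref{lemma:derivative:empirical:l:form}: it first observes that $\det\big(\delta_0\mathbf{f}_a(X_{i_0},X_{i_b})\big)$ is itself alternating by multilinearity, antisymmetrizes it over $S_{\ell+1}$, applies the induction hypothesis together with the Laplace expansion inside that symmetrized sum, and then identifies the result with the wedge product via a permutation-substitution identity for $(\boldsymbol{\omega}\wedge\boldsymbol{\eta})$ (their equation \eqref{eq:wedge:product:2:l-1}). You instead start from the coboundary representation $\delta_0\mathbf{f}_1\wedge\dots\wedge\delta_0\mathbf{f}_\ell=\delta_{\ell-1}(\mathbf{f}_1\boldsymbol{\eta})$, expand $\delta_{\ell-1}$ termwise using the scalar-multiplication formula, discard the closed part $\tfrac{F}{\ell}\,\delta_{\ell-1}\boldsymbol{\eta}=0$, and reassemble the boundary terms into the Laplace expansion of $\det(M)$; the only delicate point, the reference-point shift at $j=0$, is handled correctly by your column-subtraction identity
\begin{align*}
(\ell-1)!\,\boldsymbol{\eta}_0=\sum_{b=1}^\ell(-1)^{b+1}\det\big(M^{1,b}\big),
\end{align*}
and indeed $F_b-F_0=M_{1b}$ closes the computation. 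There is no circularity, since the paper proves \Cref{lemma:derivative:empirical:l:form} directly from the definitions. The trade-off: the paper's argument keeps this lemma independent of the Leibniz rule and works purely with permutation combinatorics, whereas yours leans on the already-established coboundary identity and replaces the permutation sums with elementary determinant manipulations, at the cost of an extra dependency and the $j=0$ bookkeeping.
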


    \begin{proof}
        First note that the right-hand side $\det( \delta_0 \mathbf{f}_a(X_{i_0},X_{i_b}))$ is in $L^2_\wedge(V^{\ell+1})$, as can be seen by the multilinearity of the determinant. Hence, 
        \begin{align}\label{eq:D:alternating}
            \det( \delta_0 \mathbf{f}_a(X_{i_0},X_{i_b}))=\frac{1}{(\ell+1)!}\sum_{\sigma\in S_{\ell+1}}\operatorname{sgn}(\sigma)\det( \delta_0 \mathbf{f}_{a}(X_{i_{\sigma(0)}},X_{i_{\sigma(b)}})).
        \end{align}
        On the other hand, for $\boldsymbol{\omega}=\delta_0\mathbf{f}_1$ and $\boldsymbol{\eta}=\delta_0\mathbf{f}_2\wedge\dots\wedge \delta_0\mathbf{f}_\ell$ it holds that, for each $b=1,\dots\ell$,
        \begin{align}\label{eq:wedge:product:2:l-1}
        &(\boldsymbol{\omega}\wedge \boldsymbol{\eta})(X_{i_0},\dots,X_{i_\ell})\\
        &=\frac{1}{(\ell+1)!}\sum_{\sigma\in S_{\ell+1}}\operatorname{sgn}(\sigma)\boldsymbol{\omega}(X_{i_{\sigma(0)}}, X_{i_{\sigma(1)}})\boldsymbol{\eta}(X_{i_{\sigma(1)}},\dots, X_{i_{\sigma(\ell)}})\nonumber\\
        &=\frac{1}{(\ell+1)!}\sum_{\sigma\in S_{\ell+1}}(-1)^{b+1}\operatorname{sgn}(\sigma)\boldsymbol{\omega}(X_{i_{\sigma(0)}}, X_{i_{\sigma(b)}})\boldsymbol{\eta}(X_{i_{\sigma(0)}},\dots,\widehat{X_{i_{\sigma(b)}}},\dots,X_{i_{\sigma(\ell)}})\nonumber,
        \end{align}
        where the first equality is by \eqref{eq:def:wedgeProduct} and the second equality follows from substituting $\sigma$ by $\sigma\circ (b,b-1,\dots,0)$. Using these two properties, we prove the claim by induction on $\ell$. For $\ell =1$ the claim is clear. Let us assume the claim holds for $\ell-1\geq 1$. Then we have
        \begin{align*}
            \det( \delta_0 \mathbf{f}_a(X_{i_0},X_{i_b})) &=\frac{1}{(\ell+1)!}\sum_{\sigma\in S_{\ell+1}}\operatorname{sgn}(\sigma)\det( \delta_0 \mathbf{f}_{a}(X_{i_{\sigma(0)}},X_{i_{\sigma(b)}}))\\
            &=\frac{1}{(\ell+1)\ell}\sum_{\sigma\in S_{\ell+1}}\sum_{b=1}^\ell(-1)^{b+1}\operatorname{sgn}(\sigma)\delta_0\mathbf{f}_{1}(X_{i_{\sigma(0)}},X_{i_{\sigma(b)}})\\
            &\qquad\qquad\qquad\qquad\qquad\cdot(\delta_0\mathbf{f}_2\wedge\dots\wedge \delta_0\mathbf{f}_\ell)(X_{i_{\sigma(0)}},\dots,\widehat{X_{i_{\sigma(b)}}},\dots,X_{i_{\sigma(\ell)}})\\
            &=\frac{(\ell+1)!\ell}{(\ell+1)\ell}(\delta_0\mathbf{f}_1\wedge\dots\wedge \delta_0\mathbf{f}_\ell)(X_{i_0},\dots,X_{i_\ell}),
        \end{align*}
       where we used \eqref{eq:D:alternating} in the first equality, the induction hypothesis and the Laplace expansion in the second equality, and \eqref{eq:wedge:product:2:l-1} in the last equality.
    \end{proof}

    \begin{lemma}\label{lemma:representation:Dirichlet:form:empirical:l:form}
    Let $\mathbf{f}_1,\dots,\mathbf{f}_\ell\in L^2(V)$ and $\boldsymbol{\omega}=\mathbf{f_1}\cdot(\delta\mathbf{f}_2\wedge\dots\wedge \delta\mathbf{f_\ell})\in L^2_\wedge(V^{\ell})$. Then 
    \begin{align*}
        \langle\boldsymbol{\omega},\mathscr{L}_{\ell-1}^{\up}\boldsymbol{\omega}\rangle_n=\frac{1}{\ell!^2}\sum_{1\leq i_0<\dots<i_\ell\leq n}k_{i_0\cdots i_\ell}\det_{\ell\times\ell}\big( \delta\mathbf{f}_a(X_{i_0},X_{i_b})\big)^2.
    \end{align*}
    \end{lemma}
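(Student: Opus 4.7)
The proof is essentially an immediate concatenation of the two preceding lemmas with the definition of the up Hodge Laplacian, so there is no deep obstacle here; the task is simply to chain the pieces cleanly.

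The plan is to first rewrite the quadratic form using self-adjointness: since $\mathscr{L}_{\ell-1}^{\up} = \delta_{\ell-1}^* \delta_{\ell-1}$, we have
\[
\langle \boldsymbol{\omega}, \mathscr{L}_{\ell-1}^{\up} \boldsymbol{\omega} \rangle_n
= \langle \delta_{\ell-1} \boldsymbol{\omega}, \delta_{\ell-1} \boldsymbol{\omega} \rangle_n
= \| \delta_{\ell-1} \boldsymbol{\omega} \|_n^2.
\]
This reduces the problem to computing $\delta_{\ell-1} \boldsymbol{\omega}$ and its squared norm.

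Next, I would apply \Cref{lemma:derivative:empirical:l:form} directly to the special form $\boldsymbol{\omega} = \mathbf{f}_1 (\delta \mathbf{f}_2 \wedge \cdots \wedge \delta \mathbf{f}_\ell)$, which yields
\[
\delta_{\ell-1} \boldsymbol{\omega} = \delta \mathbf{f}_1 \wedge \delta \mathbf{f}_2 \wedge \cdots \wedge \delta \mathbf{f}_\ell \in L^2_\wedge(V^{\ell+1}).
\]
Then \Cref{lemma:derivative:empirical:l:form:det} gives the determinantal formula
\[
(\delta \mathbf{f}_1 \wedge \cdots \wedge \delta \mathbf{f}_\ell)(X_{i_0}, \dots, X_{i_\ell})
= \frac{1}{\ell!} \det_{\ell \times \ell}\bigl(\delta \mathbf{f}_a(X_{i_0}, X_{i_b})\bigr)_{a,b}.
\]

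Finally, I would use the alternative expression for the inner product on $L^2_\wedge(V^{\ell+1})$ with the sum restricted to strictly increasing indices,
\[
\| \boldsymbol{\eta} \|_n^2 = \sum_{1 \leq i_0 < \cdots < i_\ell \leq n} k_{i_0 \cdots i_\ell} \, \boldsymbol{\eta}(X_{i_0}, \dots, X_{i_\ell})^2,
\]
applied with $\boldsymbol{\eta} = \delta_{\ell-1} \boldsymbol{\omega}$. Substituting the determinantal formula and pulling out the factor $1/\ell!$ (squared) yields exactly the claimed identity. The only point requiring a moment's thought is choosing the ordered-index form of the inner product rather than the $(\ell+1)!^{-1}$ normalized sum over all tuples, since both the determinant and the weights $k_{i_0\cdots i_\ell}$ are already appropriately symmetric/alternating, making the ordered-sum formulation the natural one.
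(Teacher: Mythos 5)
Your proof is correct and follows exactly the paper's argument: rewrite the quadratic form via adjointness as $\|\delta_{\ell-1}\boldsymbol{\omega}\|_n^2$, apply \Cref{lemma:derivative:empirical:l:form} and then \Cref{lemma:derivative:empirical:l:form:det}, and conclude with the ordered-index form of the inner product. No differences worth noting.
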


    \begin{proof}
        By definition of the up-Hodge Laplacian and Lemma \ref{lemma:derivative:empirical:l:form}, we have
        \begin{align*}
            \langle\boldsymbol{\omega},\mathscr{L}_{\ell-1}^{\up}\boldsymbol{\omega}\rangle_n=\langle\boldsymbol{\omega},\delta_{\ell-1}^*\delta_{\ell-1}\boldsymbol{\omega}\rangle_n&=\langle\delta_{\ell-1}\boldsymbol{\omega},\delta_{\ell-1}\boldsymbol{\omega}\rangle_n=\langle\delta_0\mathbf{f}_1\wedge\dots\wedge \delta_0\mathbf{f}_\ell,\delta_0\mathbf{f}_1\wedge\dots\wedge \delta_0\mathbf{f}_\ell\rangle_n.
        \end{align*}
        Hence, the claim follows from Lemma \ref{lemma:derivative:empirical:l:form:det} and the definition of the inner product in \eqref{eq:def:discreteInnerProd}.
    \end{proof}

    In what follows, we call $(\ell-1)$-forms of the form $\boldsymbol{\omega}=\mathbf{f_1}(\delta\mathbf{f}_2\wedge\dots\wedge \delta\mathbf{f_\ell})$ an empirical $(\ell-1)$-form.

    \begin{example}
        For $\ell=1$, Lemma \ref{lemma:representation:Dirichlet:form:empirical:l:form} reduces to the well-known identity
        \begin{align}\label{eq:GL:BF}
            \langle\mathbf{f},\mathscr{L}_{0}\mathbf{f}\rangle_n=\frac{1}{2}\sum_{i,j=1}^nk_{ij}(\mathbf{f}(X_j)-\mathbf{f}(X_i))^2.
        \end{align}
    \end{example}
    
\section{The Laplace-Beltrami operator on a manifold}
\label{section:LapMfld}

Let $\mathcal{M}$ be a $d$-dimensional submanifold of $\mathbb{R}^p$, equipped with the Riemannian metric induced by the ambient space. We assume that $\mathcal{M}$ is closed, connected, and that $\operatorname{vol}(\mathcal{M})=1$. Let $C^\infty(\mathcal{M})$ be the set of all smooth and real-valued functions on $\mathcal{M}$. Let $\Delta$ be the Laplace-Beltrami operator on $\mathcal{M}$ (with the sign convention that $\Delta$ is positive on $C^\infty(\mathcal{M})$ endowed with the $L^2$-inner product), and let $(e^{-t\Delta})_{t\geq 0}$ be the heat semigroup on $\mathcal{M}$. Since $\mathcal{M}$ is closed, $e^{-t\Delta}$ has an integral kernel $k_t$ (the so-called heat kernel) satisfying
\begin{align}\label{eq:integral:kernel}
    (e^{-t\Delta}f)(x)=\int_{\mathcal{M}}k_t(x,y)f(y)\,dy
\end{align}
for all $x\in\mathcal{M}$, all $t> 0$, and all $f\in C^\infty(\mathcal{M})$, where $dy$ denotes the volume measure induced by the Riemannian metric on $\mathcal{M}$. The heat kernel $k_t$ is symmetric, positive, and satisfies $\int_{\mathcal{M}}k_t(x,y)\,dy=1$ for all $x\in\mathcal{M}$. For more background see \cite{Ros97,Gri09,War83}.

For $\ell\geq 0$, let $\Omega^{\ell}(\mathcal{M})$ be the set of all smooth differential $\ell$-forms, let $d$ be the exterior differentiation operator, and let $\wedge$ be the wedge product. The Riemannian metric defines an inner product $\langle \cdot,\cdot\rangle_x$ on the cotangent space at point $x$, leading to a global inner product $\int_{\mathcal{M}}\langle \cdot,\cdot\rangle_x\,dx$ on $\Omega^{1}(\mathcal{M})$. Similarly, the inner product induces an inner product on $\Omega^{\ell}(\mathcal{M})$, which we denote by $\langle \cdot,\cdot\rangle$. In what follows it is important that if $f_1,\dots, f_{\ell}\in C^\infty(\mathcal{M})$, then 
\begin{align*}
        \langle df_1\wedge\dots\wedge df_\ell,df_1\wedge\dots\wedge df_\ell\rangle=\int_{\mathcal{M}}\det\begin{pmatrix}
                    \langle df_1,df_1\rangle_x & \ldots & \langle df_1,df_\ell\rangle_x \\
                    \vdots & \ddots & \vdots \\
                    \langle df_\ell,df_1\rangle_x & \ldots & \langle df_\ell,df_\ell\rangle_x
                \end{pmatrix}\,dx
    \end{align*}
    and the individual inner products $\langle df_a,df_b\rangle_x$ coincide with the carré du champ operator of $f_a$ and $f_b$. Again, the above information can be summarized into the de Rham cochain complex
        \[
        \xymatrix{
        0 \ar[r] & \Omega^0(\mathcal{M}) \ar[r]^-{d_0} & \Omega^1(\mathcal{M}) \ar[r]^-{d_1} & \cdots \ar[r]^-{d_{\ell-1}} & \Omega^\ell(\mathcal{M})  \ar[r]^-{d_{\ell}} & \cdots,
        }
        \]
which is the differential counterpart to \eqref{eq:discreteComplex}. Finally, for each $\ell\geq 0$, let $d_{\ell}^*$ be the adjoint of $d_\ell$ with respect to $\langle \cdot,\cdot\rangle$. Then the up and down Laplace-Beltrami operators on $\ell$-forms are
        \begin{align*}
            \Delta_\ell^{\up}=d_\ell^*d_{\ell},\qquad \Delta_\ell^{\down}=d_{\ell-1}d_{\ell-1}^*.
        \end{align*} 
and the Laplace-Beltrami operator on $\ell$-forms are $\Delta_0=\Delta_0^{\up}$ and
        \begin{align*}
            \Delta_\ell=\Delta_\ell^{\up}+\Delta_\ell^{\down}=d_\ell^*d_{\ell}+d_{\ell-1}d_{\ell-1}^*
        \end{align*} 
for $\ell \geq 1$. Similarly as in the graph theoretic setting, an Hodge decomposition holds stating that $\Omega^{\ell}(\mathcal{M})=\operatorname{im}(d_{\ell-1})\oplus \ker(\Delta_\ell)\oplus \operatorname{im}(d_\ell^*)$. Moreover, topological information is contained in $\ker(\Delta_\ell)$, which is isomorphic to the $\ell$th de Rham cohomology group. For more details, see \cite{Ros97}.

\section{Main result: Dirichlet form error bound} \label{section:mainResult}

\subsection{Assumptions and main result}

The main goal of this paper is to relate $\mathscr{L}_{\ell-1}^{\up}$ to $\Delta_{\ell-1}^{\up}$ in the case that $V=\{X_1,\dots,X_n\}$ is a sample of independent and identical distributed points in a submanifold of the Euclidean space. 

\begin{assumption}[Manifold hypothesis]\label{Ass:MH} Let $X_1,\dots,X_n$ be independent and identical distributed random variables uniformly distributed in a closed and connected submanifold $\mathcal{M}\subseteq\mathbb{R}^p$ with $\dim(\mathcal{M})=d$ and $\operatorname{vol}(\mathcal{M})=1$.
\end{assumption}

The manifold hypothesis is crucial in modern theory of machine learning \cite{MR2964193}. Under Assumption \ref{Ass:MH}, the empirical Hodge Laplacian can be analyzed as an approximation of the Laplace-Beltrami operator. In this paper, we make a first step in this direction and study the quadratic form $\langle\boldsymbol{\omega},\mathscr{L}_{\ell-1}^{\up}\boldsymbol{\omega}\rangle_n$ as approximations of the Dirichlet form or energy $\langle\omega,\Delta_{\ell-1}^{\up}\omega\rangle$ for certain (empirical) $\ell$-forms $\omega$ and $\boldsymbol{\omega}$. More precisely, for a fixed set $f_1,\dots,f_\ell\in C^\infty(\mathcal{M})$ and its restrictions $\mathbf{f}_1,\dots,\mathbf{f}_\ell\in L^2_\wedge(V)$ to the data points, we consider 
\begin{align}\label{eq:definition:emprical:l:form}
    \omega=f_1(df_2\wedge\dots\wedge df_\ell),\qquad\boldsymbol{\omega}=\mathbf{f_1}(\delta\mathbf{f}_2\wedge\dots\wedge \delta\mathbf{f_\ell})\in L^2_\wedge(V^{\ell})
\end{align}
As weights, we consider 
\begin{align}\label{eq:definition:weights}
    k_{i_0\dots i_\ell}=\frac{1}{\binom{n}{\ell+1}}\frac{\ell !}{(2t)^\ell}\Big(\frac{1}{\ell+1}\sum_{a=0}^\ell\prod_{\substack{b=0\\b\neq a}}^\ell k_t(X_{i_a},X_{i_b})\Big)
\end{align}
for all $i_0,\dots,i_\ell\in\{1,\dots,n\}$ and all $\ell\geq 0$, and with time parameter $t>0$. See also equation (42) in \cite{HK24} and equation (3) in \cite{Smale12}. Here, $k_t$ denoted the heat kernel on $\mathcal{M}$, as introduced in Section \ref{section:LapMfld}. With the choice \eqref{eq:definition:weights}, we call $\mathscr{L}_{\ell-1}^{\up}$ the empirical up Hodge Laplacian and we call $\langle\boldsymbol{\omega},\mathscr{L}_{\ell-1}^{\up}\boldsymbol{\omega}\rangle_n$ the empirical Dirchlet form.  Our analysis will be based on the following quantitative boundedness and smoothness conditions on the heat kernel $k_t$ and the functions $f_1,\dots,f_\ell$. 

\begin{assumption}[Global heat kernel bound]\label{Ass:GHB} There are constants $c_1,C_1>0$ such that 
    \begin{align*}
        k_t(x,y)\leq \frac{C_1}{t^{d/2}}\exp\Big(-c_1\frac{d_{\mathcal{M}}(x,y)^2}{t}\Big). 
    \end{align*}
for all $x,y\in\mathcal{M}$ and all $t\in(0,1]$. Here $d_{\mathcal{M}}$ denotes the intrinsic distance on $\mathcal{M}$.
\end{assumption}

\begin{assumption}[Smoothness properties]\label{Ass:SP} There is a constant $C_2>0$ such that
    \begin{align*}
        \|f_a\|_{L^\infty},\|\Delta (f_af_b)\|_{L^\infty},\Big\|\Big(\frac{e^{-t\Delta}-I+t\Delta}{t^2}\Big)(f_af_b)\Big\|_{L^\infty}\leq C_2,
    \end{align*}
    for all $0\leq a,b\leq \ell$, where $f_0\equiv 1$.
\end{assumption}

Note that both assumptions are satisfied for $\mathcal{M}$ closed (see \cite{MR1736868}) and smooth functions $f_1,\dots,f_\ell$. Their particular purpose is to introduce the constants $c_1, C_1, C_2$ that are important for our analysis. We now state our main result.

    \begin{thm}\label{thm:main:result}
    Let $t\in(0,1]$ and $A>0$ be real numbers and let $\ell\geq 0$ and $n\geq 2+2\ell$ be natural numbers. Moreover, suppose $\mathscr{L}_{\ell-1}^{\up}$ is the empirical up Hodge Laplacian based on the sample $X_1,\dots,X_n$ satisfying Assumption \ref{Ass:MH} and having the weights $(k_{i_0\cdots i_\ell})$ introduced in \eqref{eq:definition:weights}. Furthermore, let $f_1,\dots,f_\ell\in C^\infty(\mathcal{M})$ and let $\omega=f_1(df_2\wedge\dots\wedge df_\ell)$ and $\boldsymbol{\omega}=\mathbf{f_1}(\delta\mathbf{f}_2\wedge\dots\wedge \delta\mathbf{f_\ell})$ as introduced in \eqref{eq:definition:emprical:l:form}. Finally, let $\Delta^{\up}_{\ell-1}$ be the up Laplace-Beltrami operator on $\mathcal{M}$ and suppose that Assumptions \ref{Ass:GHB} and \ref{Ass:SP} are satisfied. Then, with probability at least $1-n^{-A}$, we have
    \begin{align*}
        &\Big|\langle\boldsymbol{\omega},\mathscr{L}_{\ell-1}^{\up}\boldsymbol{\omega}\rangle_n-\langle\omega,\Delta_{\ell-1}^{\up}\omega\rangle\Big|\leq C\Big(t+\sum_{j=1}^{\ell+1}\Big(\frac{(\log n)^{j/2}}{t^{d(j-1)/4}n^{j/2}}+\frac{(\log n)^{(j+1)/2}}{t^{d(j-1)/2}n^{(j+1)/2}}\Big)\Big),
    \end{align*}
    where $C>0$ is a constant depending only on $\ell,A,c_1,C_1,C_2$.
    \end{thm}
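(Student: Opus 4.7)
The plan is to split $|\langle\boldsymbol{\omega},\mathscr{L}_{\ell-1}^{\up}\boldsymbol{\omega}\rangle_n - \langle\omega,\Delta_{\ell-1}^{\up}\omega\rangle|$ into a deterministic bias and a stochastic fluctuation, handling the bias via heat-kernel asymptotics on $\mathcal{M}$ (this produces the $t$-term) and the fluctuation via a concentration inequality for $U$-statistics (this produces the sum over $j$). First, I would combine Lemma~\ref{lemma:representation:Dirichlet:form:empirical:l:form}, the weights \eqref{eq:definition:weights}, and Lemma~\ref{lemma:derivative:empirical:l:form:det} to recast $\langle\boldsymbol{\omega},\mathscr{L}_{\ell-1}^{\up}\boldsymbol{\omega}\rangle_n$ as a $U$-statistic of order $\ell+1$: it equals $\binom{n}{\ell+1}^{-1}\sum_{i_0<\cdots<i_\ell} h_t(X_{i_0},\ldots,X_{i_\ell})$ with symmetric kernel
\[
h_t(x_0,\ldots,x_\ell) = \frac{c_\ell}{t^\ell}\,\Big(\frac{1}{\ell+1}\sum_{a=0}^\ell\prod_{b\ne a} k_t(x_a,x_b)\Big)\,\bigl[\det\bigl(f_a(x_b)-f_a(x_0)\bigr)\bigr]^2,
\]
for some explicit combinatorial constant $c_\ell$, with $a,b\in\{1,\ldots,\ell\}$ indexing the determinant.

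For the bias, by symmetry of $h_t$ the expectation $U_n := \langle\boldsymbol{\omega},\mathscr{L}_{\ell-1}^{\up}\boldsymbol{\omega}\rangle_n$ reduces (up to the standard $O(1/n)$ boundary correction) to
\[
\mathbb{E}[U_n] = \frac{1}{(\ell!)^2(2t)^\ell}\int_{\mathcal{M}^{\ell+1}}\prod_{b=1}^\ell k_t(x_0,x_b)\,\bigl[\det\bigl(f_a(x_b)-f_a(x_0)\bigr)\bigr]^2\,dx_0\cdots dx_\ell.
\]
I would then expand the squared determinant via Leibniz into $(\ell!)^2$ signed monomials. For each monomial, integrating $x_b$ against $k_t(x_0,\cdot)$ converts any product of the form $f_a(x_b)f_{a'}(x_b)$ into the heat-semigroup value $(e^{-t\Delta}(f_af_{a'}))(x_0)$, so the entire integral becomes a polynomial expression in $f_a(x_0)$ and in $(e^{-t\Delta}(f_af_{a'}))(x_0)$. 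Using the expansion $e^{-t\Delta}\varphi = \varphi - t\Delta\varphi + t^2 R_t\varphi$ with $\|R_t\varphi\|_{L^\infty}\le C_2$ (Assumption~\ref{Ass:SP}) and re-assembling the Leibniz sum via the carr\'e-du-champ identity, the leading term matches $\langle\omega,\Delta_{\ell-1}^{\up}\omega\rangle = \int_{\mathcal{M}}\det(\langle df_a,df_b\rangle_x)\,dx$ from Section~\ref{section:LapMfld}, leaving a residual of order~$t$.

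For the stochastic fluctuation, I would use the Hoeffding decomposition $U_n - \mathbb{E}[U_n] = \sum_{j=1}^{\ell+1}\binom{\ell+1}{j}U_n^{(j)}$, where $U_n^{(j)}$ is the completely degenerate projection of order~$j$, and invoke a Bernstein-type concentration inequality for degenerate $U$-statistics, such as the one in \cite{Mins24}: with probability at least $1-n^{-A}$,
\[
\bigl|U_n^{(j)}\bigr| \le C\Bigl(\frac{\sigma_j\,(\log n)^{j/2}}{n^{j/2}} + \frac{B_j\,(\log n)^{(j+1)/2}}{n^{(j+1)/2}}\Bigr),
\]
with $\sigma_j := \|h_t^{(j)}\|_{L^2}$ and $B_j := \|h_t^{(j)}\|_{L^\infty}$. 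The key estimates
\[
\sigma_j \le C\,t^{-d(j-1)/4},\qquad B_j \le C\,t^{-d(j-1)/2},
\]
follow from the $L^q$ heat-kernel bound $\|k_t(x,\cdot)\|_{L^q(\mathcal{M})}\le C\,t^{-d(q-1)/(2q)}$, a consequence of Assumption~\ref{Ass:GHB}, applied to the $j-1$ heat-kernel factors of $h_t$ that survive the $\ell+1-j$ integrations defining $h_t^{(j)}$; the function part $[\det(f_a(x_b)-f_a(x_0))]^2$ is uniformly bounded by Assumption~\ref{Ass:SP}. Summing over $j=1,\ldots,\ell+1$ yields the second term of the stated bound.

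The main obstacle is the sharp $L^2$ and $L^\infty$ control of the Hoeffding projections $h_t^{(j)}$: the kernel $h_t$ concentrates near the diagonal at scale $\sqrt{t}$, so one must track carefully, projection order by projection order, which of the $\ell$ heat-kernel factors are integrated out (each contributing a bounded mass) versus which survive (each contributing an $L^q$-norm of order $t^{-d(q-1)/(2q)}$), and combine these with the uniform $L^\infty$ control on the function part. The bias analysis is combinatorially heavy for $\ell\ge 2$, but each Leibniz monomial reduces to $e^{-t\Delta}$ acting on a product $f_af_b$, to which Assumption~\ref{Ass:SP} applies term by term, so the difficulty there is essentially bookkeeping rather than new analysis.
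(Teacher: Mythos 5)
Your overall architecture matches the paper's: the bias is handled by expanding the heat semigroup and reassembling the carr\'e du champ (equivalently, the paper first approximates each entry $\langle df_a,df_b\rangle_x$ by $\frac{1}{2t}\int k_t(x,y)\delta f_a(x,y)\delta f_b(x,y)\,dy$ and then uses Andr\'eief's identity plus a determinant perturbation lemma --- your Leibniz-monomial expansion is the same computation run in the other order), and the variance is handled by the Hoeffding decomposition together with moment/concentration bounds for degenerate $U$-statistics driven by $\|h_t^{(j)}\|_{L^2}$ and $\|h_t^{(j)}\|_{L^\infty}$. The target bounds you state for $\sigma_j$ and $B_j$ agree with the paper's Corollary on the degenerate kernels after the index shift $j\mapsto j-1$.

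However, there is a genuine gap in how you propose to \emph{prove} those kernel bounds. You bound the function part $[\det(f_a(x_b)-f_a(x_0))]^2$ uniformly and treat the heat-kernel product separately via $\|k_t(x,\cdot)\|_{L^q}\le Ct^{-d(q-1)/(2q)}$ and $\int k_t(x_0,\cdot)=1$. This bookkeeping ignores the normalization $\ell!/(2t)^\ell$ in the weights \eqref{eq:definition:weights}: the kernel is $h_t=\prod_{b=1}^\ell\big[\tfrac1t k_t(x_0,x_b)\big]\cdot D^2$ (up to constants), so each integrated factor contributes $\int \tfrac1t k_t(x_0,x_b)\,dx_b=\tfrac1t$, not a bounded mass, and each surviving factor contributes $Ct^{-1-d/2}$, not $Ct^{-d/2}$. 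Your accounting therefore produces an extra $t^{-\ell}$ relative to the claimed $B_j\le Ct^{-d(j-1)/2}$, $\sigma_j\le Ct^{-d(j-1)/4}$, and the final rate would degrade accordingly. The fix --- and this is exactly what the paper's Lemma on kernel/smoothness does --- is to \emph{not} separate the determinant from the kernels: expand $D^2$ via Leibniz and AM--GM into sums of products $\prod_b(f_{\sigma(b)}(x_b)-f_{\sigma(b)}(x_0))^2$, pair each factor with its own $\tfrac1t k_t(x_0,x_b)$, and use that $f_a$ is Lipschitz (which follows from Assumption \ref{Ass:SP} via the carr\'e du champ identity) together with the Gaussian decay in Assumption \ref{Ass:GHB} to get the two estimates $\tfrac1t k_t(x,y)(f(x)-f(y))^2\le Ct^{-d/2}$ pointwise and $\int_{\mathcal M}\tfrac1t k_t(x,y)(f(x)-f(y))^2\,dy\le C$. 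With these paired bounds the surviving/integrated accounting gives precisely $Ct^{-d(j-1)/2}$ and $Ct^{-d(j-1)/4}$; without the pairing the argument does not close. (A minor additional remark: $\mathbb{E}[U_n]$ equals the integral of $h_t$ exactly, since the $U$-statistic averages over distinct indices of i.i.d.\ points; no $O(1/n)$ boundary correction is needed.)
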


    The proof of Theorem \ref{thm:main:result} is given in Sections \ref{section:bias}--\ref{section:variance}. More precisely, in Section \ref{section:bias}, we study the approximation error $\langle\omega,\Delta_{\ell-1}^{\up}\omega\rangle-\mathbb{E} \langle\boldsymbol{\omega},\mathscr{L}_{\ell-1}^{\up}\boldsymbol{\omega}\rangle_n$, which relates Hodge theory on Riemannian manifolds to continuous Hodge theory on metric spaces. In Section \ref{section:variance}, we study the stochastic error $\langle\boldsymbol{\omega},\mathscr{L}_{\ell-1}^{\up}\boldsymbol{\omega}\rangle_n-\mathbb{E} \langle\boldsymbol{\omega},\mathscr{L}_{\ell-1}^{\up}\boldsymbol{\omega}\rangle_n$ using the machinery of U-statistics.

\subsection{Discussion}

\paragraph{Comparison to the literature} 
Let us compare our results with the literature, which has so far focused on the case $\ell =0$. For $\ell=0$, that is in the case of the (un-)normalized graph Laplacians, such and similar bounds have been studied previously in the context of Laplacian Eigenmaps and Diffusion Maps \cite{NIPS2006_5848ad95,Gine06,MR4130541,MR4491976,MR4452681}. Dirichlet error bounds provide a first important step towards the more sophisticated study of the spectral convergence of graph Laplacians towards the Laplace-Beltrami operator. A state-of-the-art bound in \cite{MR4452681} provides the Dirichlet error rate $\max(t,1/(nt^{d/2}))$ up to log-terms. In contrast, our theorem yield with high probability
    \begin{align*}
        \big|\langle f,\Delta f\rangle_n-\langle\mathbf{f},\mathscr{L}_{0}\mathbf{f}\rangle_n\big|\leq C\Big(t+\frac{\log^{1/2}n}{n^{1/2}}+\frac{\log n}{nt^{d/4}}+\frac{\log^{3/2} n}{n^{3/2}t^{d/2}}\Big).
    \end{align*}
    In particular, our result shows that the dimension dependence, that is the curse of dimensionality, only appears in lower order terms. An important question is to apply bias  reduction techniques to reduce the $Ct$ bias term. 

\paragraph{Directions for future research}
The present analysis provides a basis for further investigations of the problem of approximating the Laplace-Beltrami operator on $\ell$-forms by empirical Hodge Laplacians.

First, \Cref{thm:main:result} provides an concentration bound for the empirical Dirichlet form. This can be seen as a first step towards the study of eigenvalues and eigenforms. In the case of eigenvalues, this can be approached via the min-max characterizations. However, further steps must be implemented first. So far, \Cref{thm:main:result} is restricted to empirical $\ell$-forms in contrast to the set of all alternating functions and it depends on the strong smoothness properties in Assumption~\ref{Ass:SP}. 

Second, our results are stated in terms of the heat kernel, which is unknown to the statistician. It is possible to replace the heat kernel with a Gaussian kernel by combining the analysis in Section 3 of \cite{Wahl24} with Lemma \ref{lemma:representation:Dirichlet:form:empirical:l:form}, in order to obtain practical results. Since this requires several further assumptions on the local approximation of the intrinsic distance by the extrinsic distance and of the heat kernel by the geodesic kernel, we have not included this in the current paper and refer to subsequent work.


\section{The bias term: continuous Hodge theory}\label{section:bias}

\subsection{Main approximation bound}

In this section we relate $\langle\omega,\Delta_{\ell-1}^{\up}\omega\rangle$ to
\begin{align*}
    \mathbb{E} \langle\boldsymbol{\omega},\mathscr{L}_{\ell-1}^{\up}\boldsymbol{\omega}\rangle_n
    & =\mathbb{E}\frac{1}{\binom{n}{\ell+1}}\sum_{1\leq i_0<\dots<i_\ell\leq n}\frac{1}{\ell !(2t)^\ell}\Big(\frac{1}{\ell+1}\sum_{a=0}^\ell\prod_{\substack{b=0\\b\neq a}}^\ell k_t(X_{i_a},X_{i_b})\Big)\det_{\ell\times\ell}\big( \delta\mathbf{f}_a(X_{i_0},X_{i_b})\big)^2\\
    &=\frac{1}{\ell!(2t)^\ell}\int_{\mathcal{M}^{\ell+1}}\det_{\ell\times\ell}\big( \delta f_a(x,x_b)\big)^2\Big(\prod_{b=1}^\ell k_t(x,x_b)dx_b\Big)dx,
\end{align*}
where we applied Lemma \ref{lemma:representation:Dirichlet:form:empirical:l:form} and the choice of weights in \eqref{eq:definition:weights} in the first equality, and the symmetry of the involved squared determinant in the second equality. The main result of this section is the following error bound.

\begin{proposition}\label{prop:bias}
    Let $t\in(0,1]$, $f_1,\dots,f_\ell\in C^\infty(\mathcal{M})$, and $\omega=f_1\cdot(df_2\wedge\dots\wedge df_\ell)$. Suppose that Assumption \ref{Ass:SP} is satisfied. Then we have
    \begin{align*}
        \Big|\langle\omega,\Delta_{\ell-1}^{\up}\omega\rangle-\frac{1}{\ell!(2t)^\ell}\int_{\mathcal{M}^{\ell+1}}\limits\big(\det_{\ell\times\ell}( \delta f_a(x,x_b))\big)^2\Big(\prod_{b=1}^\ell k_t(x,x_b)dx_b\Big)dx\Big|\leq C t, 
    \end{align*}
    where $C>0$ is a constant depending only on $\ell$ and $C_2$. 
\end{proposition}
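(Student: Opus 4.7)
The plan is to rewrite both sides as integrals over $\mathcal{M}$ of determinants of $\ell\times\ell$ matrices of functions, and then compare these matrices pointwise via a short-time heat-kernel expansion. For the left-hand side: by $\Delta_{\ell-1}^{\up}=d_{\ell-1}^*d_{\ell-1}$ and the continuous Leibniz rule (established exactly as in Lemma~\ref{lemma:derivative:empirical:l:form} using $d\circ d=0$), $d\omega=df_1\wedge\cdots\wedge df_\ell$, so
\[
\langle\omega,\Delta_{\ell-1}^{\up}\omega\rangle=\|df_1\wedge\cdots\wedge df_\ell\|^2=\int_{\mathcal{M}}\det_{\ell\times\ell}\bigl(\langle df_a,df_b\rangle_x\bigr)\,dx,
\]
by the Gram-determinant identity quoted in Section~\ref{section:LapMfld}.

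Next I would recast the integral on the right-hand side in the same form. Expanding
\[
\det_{\ell\times\ell}(f_a(x_b)-f_a(x))^2=\sum_{\sigma,\tau\in S_\ell}\sgn(\sigma)\sgn(\tau)\prod_{a=1}^\ell(f_a(x_{\sigma(a)})-f_a(x))(f_a(x_{\tau(a)})-f_a(x)),
\]
and exploiting that $\prod_b k_t(x,x_b)\,dx_b$ is symmetric in the $x_b$'s, I relabel $y_a=x_{\sigma(a)}$ and set $\pi=\sigma^{-1}\tau$; the $\sigma$-sum then contributes a factor $\ell!$, and the resulting integrand factorizes over the $\ell$ coordinates, giving
\[
\tfrac{1}{\ell!(2t)^\ell}\int_{\mathcal{M}^\ell}\det_{\ell\times\ell}(f_a(x_b)-f_a(x))^2\prod_{b=1}^\ell k_t(x,x_b)\,dx_b=\det_{\ell\times\ell}\bigl(S_t^{(a,b)}(x)\bigr),
\]
with $S_t^{(a,b)}(x)=\tfrac{1}{2t}\int_{\mathcal{M}}(f_a(y)-f_a(x))(f_b(y)-f_b(x))k_t(x,y)\,dy$.

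Having matched the two sides up to a pointwise comparison on $\mathcal{M}$, I would then expand $S_t^{(a,b)}$ to order $t$. Writing $(f_a(y)-f_a(x))(f_b(y)-f_b(x))=(f_af_b)(y)-f_b(x)f_a(y)-f_a(x)f_b(y)+f_a(x)f_b(x)$, using $\int k_t(x,y)h(y)\,dy=(e^{-t\Delta}h)(x)$, and invoking the operator bound $\|(e^{-t\Delta}-I+t\Delta)u/t^2\|_{L^\infty}\leq C_2$ from Assumption~\ref{Ass:SP} (applied to $u=f_af_b$, and to $u=f_a$ and $u=f_b$ through the convention $f_0\equiv 1$), one finds
\[
S_t^{(a,b)}(x)=\tfrac{1}{2}\bigl(f_a\Delta f_b+f_b\Delta f_a-\Delta(f_af_b)\bigr)(x)+O(t)=\langle df_a,df_b\rangle_x+O(t),
\]
uniformly in $x\in\mathcal{M}$, where the last equality is the standard carré du champ identity. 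The same assumption provides a uniform bound on the entries of both matrices, so multilinearity of the determinant upgrades this to a pointwise estimate $|\det_{\ell\times\ell}(S_t^{(a,b)}(x))-\det_{\ell\times\ell}(\langle df_a,df_b\rangle_x)|\leq Ct$; integrating over $\mathcal{M}$ with $\operatorname{vol}(\mathcal{M})=1$ completes the proof.

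I expect the combinatorial reduction in the second step to be the main obstacle: one must carefully track the pair $(\sigma,\tau)$ and use the kernel's symmetry in $x_1,\ldots,x_\ell$ to collapse the squared determinant in $\ell$ integrated variables into a single determinant of pairwise integrals. The remaining analytic steps are routine short-time heat-kernel expansions, and Assumption~\ref{Ass:SP} is tailored precisely to make the associated remainders uniform in $x$ with a constant depending only on $\ell$ and $C_2$.
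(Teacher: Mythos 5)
Your proposal is correct and follows essentially the same route as the paper: the Gram-determinant identity for $\langle\omega,\Delta_{\ell-1}^{\up}\omega\rangle$, the carré du champ semigroup expansion of the pairwise integrals (the paper's Lemma~\ref{lemma:tech:1}), the entrywise-to-determinant perturbation bound (Lemma~\ref{lemma:tech:2}), and the collapse of the $\ell$-fold integral to a determinant of pairwise integrals, which is exactly Andréief's identity (Lemma~\ref{andreief:identity}) — you merely re-derive it by the standard permutation argument instead of citing it. No gaps.
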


 To prove Proposition \ref{prop:bias}, it is necessary to relate differential Hodge theory to continuous Hodge theory \cite{Smale12,MR2876937,HK24,HinzKommer24}. See e.g.~Theorem 1 in \cite{Smale12} for the matching of cohomology and Corollary~5.2 in \cite{HinzKommer24} for a related pointwise non-local-to-local convergence result of cotangential structures.
 
\subsection{Technical lemmas}
The following lemma is a quantitative variant of the approximation of the carré du champ operator through the semigroup ( see Chapter 3 in \cite{MR3155209}). 
\begin{lemma}\label{lemma:tech:1}
    Under the assumptions of Proposition \ref{prop:bias}, we have
   \begin{align*}
            \Big|\langle df_a,df_b\rangle_x- \frac{1}{2t}\int_\mathcal{M}k_t(x,y)(f_a(x)-f_a(y))(f_b(x)-f_b(y))\,dy\Big|\leq Ct,
    \end{align*}
    for all $1\leq a,b\leq \ell$ and all $x\in\mathcal{M}$, where $C>0$ depends only on $C_2$.    
\end{lemma}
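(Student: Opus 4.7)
The plan is to reduce the integral on the left-hand side to a combination of three applications of the heat semigroup, then Taylor expand each to second order in $t$ using Assumption~\ref{Ass:SP}, and finally recognize that the first-order coefficient is exactly $\langle df_a,df_b\rangle_x$ by the Leibniz rule for the Laplace-Beltrami operator.

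Concretely, I first expand the product $(f_a(x)-f_a(y))(f_b(x)-f_b(y))$ and use $\int_{\mathcal{M}} k_t(x,y)\,dy=1$ together with the integral-kernel representation \eqref{eq:integral:kernel} to rewrite
\begin{align*}
\int_{\mathcal{M}} k_t(x,y)(f_a(x)-f_a(y))(f_b(x)-f_b(y))\,dy
&= f_a(x)f_b(x) + (e^{-t\Delta}(f_af_b))(x) \\
&\quad - f_a(x)(e^{-t\Delta}f_b)(x) - f_b(x)(e^{-t\Delta}f_a)(x).
\end{align*}
By Assumption~\ref{Ass:SP}, each of the three functions $g\in\{f_a,f_b,f_af_b\}$ satisfies $e^{-t\Delta}g = g - t\Delta g + t^2 R_g$ with $\|R_g\|_{L^\infty}\leq C_2$ (applied with $f_0\equiv 1$ to handle $f_a$ and $f_b$ alone). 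Substituting these expansions into the display above, the $t^0$-contributions add up to $f_af_b + f_af_b - f_af_b - f_af_b = 0$, so the expression is of order $t$ and division by $2t$ is justified.

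Collecting the $t^1$-coefficients and dividing by $2t$ yields
\begin{align*}
\tfrac{1}{2}\bigl(-\Delta(f_af_b) + f_a\Delta f_b + f_b\Delta f_a\bigr)(x) + O(t),
\end{align*}
where the $O(t)$ remainder comes from the three $t^2 R_g$ terms and is bounded by $\tfrac{3}{2}C_2\, t$ pointwise. The Leibniz rule for the (positive) Laplacian, $\Delta(f_af_b) = f_a\Delta f_b + f_b\Delta f_a - 2\langle df_a,df_b\rangle_x$, shows that the bracketed expression equals $2\langle df_a,df_b\rangle_x$, which cancels the factor $\tfrac{1}{2}$ and produces exactly the carré du champ. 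The claimed inequality with a constant depending only on $C_2$ follows.

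I do not expect any real obstacle here: the argument is entirely pointwise and the only nontrivial inputs — the $L^\infty$ bound on $(e^{-t\Delta}-I+t\Delta)/t^2$ applied to $f_a$, $f_b$ and $f_af_b$, plus the Leibniz rule on a Riemannian manifold — are already encoded in Assumption~\ref{Ass:SP} and standard. The only bookkeeping care needed is consistency of the sign convention for $\Delta$ (positive), which matches the one fixed in Section~\ref{section:LapMfld}.
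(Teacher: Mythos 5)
Your argument is correct and is essentially the paper's own proof run in the opposite direction: the paper starts from the carr\'e du champ identity $\langle df_a,df_b\rangle_x=\tfrac12(f_a\Delta f_b+f_b\Delta f_a-\Delta(f_af_b))$ and splits it into the kernel integral plus a remainder controlled by $(e^{-t\Delta}-I+t\Delta)/t$, whereas you expand the integral via the semigroup and recover the same identity, using the same inputs. The only slip is a constant: the remainder is $\tfrac{t}{2}\lvert R_{f_af_b}-f_aR_{f_b}-f_bR_{f_a}\rvert\le \tfrac12(C_2+2C_2^2)t$ rather than $\tfrac32 C_2 t$, since $\|f_aR_{f_b}\|_{L^\infty}\le C_2^2$; this does not affect the conclusion.
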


\begin{proof}
    By the product rule, we have
    \begin{align}\label{eq:chain:rule}
        \langle df_a,df_b\rangle_x&=\langle \nabla f_a,\nabla f_b\rangle_x=\frac{1}{2}\big(f_a\Delta f_b+f_b\Delta f_a-\Delta(f_af_b)\big)(x)=(*)+(**),
    \end{align}
    where 
    \begin{align*}
        (*)=&\frac{1}{2}\Big(-f_a\Big(\frac{e^{-t\Delta}-I}{t}\Big)f_b-f_b\Big(\frac{e^{-t\Delta}-I}{t}\Big)f_a+\Big(\frac{e^{-t\Delta}-I}{t}\Big)(f_af_b)\Big)(x),\\
        (**)=&\frac{1}{2}\Big(f_a\Big(\frac{e^{-t\Delta}-I+t\Delta}{t}\Big)f_b+f_b\Big(\frac{e^{-t\Delta}-I+t\Delta}{t}\Big)f_a\Big)(x)
        -\frac{1}{2}\Big(\Big(\frac{e^{-t\Delta}-I+t\Delta}{t}\Big)(f_af_b)\Big)(x),
    \end{align*}
    and where $\langle\cdot,\cdot\rangle_x$ also denotes the inner product on the tangent space at point $x$ (by some abuse of notation).
    Now, using \eqref{eq:integral:kernel} and the fact that $k_t(x,\cdot)$ integrates to $1$, we have
    \begin{align*}
        (*)=&-\frac{1}{2t}\int_{\mathcal{M}}k_t(x,y)f_a(x)f_b(y)\,dy-\frac{1}{2t}\int_{\mathcal{M}}k_t(x,y)f_a(y)f_b(x)\,dy\\
        &+\frac{1}{2t}\int_{\mathcal{M}}k_t(x,y)f_a(y)f_b(y)\,dy+\frac{1}{2t}\int_{\mathcal{M}}k_t(x,y)f_a(x)f_b(x)\,dy\\
        =&\frac{1}{2t}\int_\mathcal{M}k_t(x,y)(f_a(x)-f_a(y))(f_b(x)-f_b(y))\,dy.
    \end{align*}
    On the other hand, by Assumptions \ref{Ass:SP}, we have that $(**)$ is upper-bounded by $(C_2^2+C_2/2)t$ in absolute value.
\end{proof}

The following lemma is the key in order to connect the empirical Dirichlet form in Lemma \ref{lemma:representation:Dirichlet:form:empirical:l:form} to an integral approximation of $\langle\cdot,\cdot \rangle$. 

\begin{lemma}[Andréief's identity]\label{andreief:identity} Let $\nu$ be a measure on a measurable space $\mathcal{X}$, and let $\phi_1,\dots,\phi_\ell:\mathcal{X}\rightarrow \mathbb{R}$ be square-integrable. Then we have
\begin{align*}
    \det_{\ell\times \ell}\Big(\int_\mathcal{X}\phi_a(y)\phi_b(y)\,\nu(dy)\Big)=\frac{1}{\ell!}\int_{\mathcal{X}^\ell}\det_{\ell\times \ell}\left(\phi_a(y_b)\right)^2\,\nu(dy_1)\cdots \nu(dy_\ell).
\end{align*}
\end{lemma}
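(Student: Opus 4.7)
The plan is to prove Andréief's identity by directly expanding the left-hand side via the Leibniz formula and then symmetrizing over permutations of the integration variables. Let $A$ denote the $\ell\times\ell$ matrix with entries $A_{ab}=\int_\mathcal{X}\phi_a(y)\phi_b(y)\,\nu(dy)$. Applying the Leibniz expansion $\det(A)=\sum_{\sigma\in S_\ell}\operatorname{sgn}(\sigma)\prod_{a=1}^\ell A_{a,\sigma(a)}$, writing each $A_{a,\sigma(a)}$ as an integral in a fresh dummy variable $y_a$, and using Fubini (justified by square-integrability and Cauchy–Schwarz) to pull the finite sum inside the product integral, I obtain
\begin{equation*}
\det(A)=\int_{\mathcal{X}^\ell}\Bigl(\prod_{a=1}^\ell\phi_a(y_a)\Bigr)\Bigl(\sum_{\sigma\in S_\ell}\operatorname{sgn}(\sigma)\prod_{a=1}^\ell\phi_{\sigma(a)}(y_a)\Bigr)\,\nu(dy_1)\cdots\nu(dy_\ell).
\end{equation*}
The inner sum is, by the Leibniz formula read in the opposite direction, exactly $\det_{\ell\times\ell}(\phi_b(y_a))$, so the bracket is the determinant of the matrix whose $(a,b)$-entry is $\phi_b(y_a)$.

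Next I would symmetrize the remaining factor $\prod_a\phi_a(y_a)$. For each permutation $\pi\in S_\ell$, the measure $\nu(dy_1)\cdots\nu(dy_\ell)$ is invariant under the relabelling $y_a\mapsto y_{\pi(a)}$; under this relabelling the product becomes $\prod_a\phi_a(y_{\pi(a)})$, while the determinant $\det(\phi_b(y_a))$ acquires a sign $\operatorname{sgn}(\pi)$ because its rows are permuted by $\pi$. Averaging the resulting $\ell!$ identical expressions gives
\begin{equation*}
\det(A)=\frac{1}{\ell!}\int_{\mathcal{X}^\ell}\Bigl(\sum_{\pi\in S_\ell}\operatorname{sgn}(\pi)\prod_{a=1}^\ell\phi_a(y_{\pi(a)})\Bigr)\det_{\ell\times\ell}(\phi_b(y_a))\,\nu(dy_1)\cdots\nu(dy_\ell).
\end{equation*}
The symmetrized factor in brackets is $\det_{\ell\times\ell}(\phi_a(y_b))$, and since transposing a matrix does not change its determinant, the product of the two determinants equals $\det_{\ell\times\ell}(\phi_a(y_b))^2$, which is the claimed right-hand side.

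There is no real obstacle here; the identity is essentially a bookkeeping exercise. The only point deserving care is the justification of interchanging the finite sums with the iterated integral: square-integrability of each $\phi_a$ together with the Cauchy–Schwarz inequality ensures that each term in the Leibniz expansion is absolutely integrable on $\mathcal{X}^\ell$, so Fubini applies. The symmetrization step is purely combinatorial and uses only the multilinear-alternating character of the determinant, so the argument is complete in two short steps: Leibniz expansion plus variable-relabelling symmetrization.
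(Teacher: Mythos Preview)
Your proof is correct. The paper does not actually prove this lemma; it simply states it, remarks that Andr\'eief's identity is a continuous analogue of the Cauchy--Binet formula, and cites the random matrix theory literature. So there is no route in the paper to compare against: you have supplied a complete argument where the authors rely on a reference. Your Leibniz-expansion-then-symmetrize approach is the standard one and is carried out cleanly; the identification of the two determinants (one from the inner $\sigma$-sum, one from the symmetrizing $\pi$-average) and the transpose observation at the end are all correct.
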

Andréief's identity is a continuous analog of the Cauchy-Binet formula. It is a standard technique in random matrix theory \cite{Con05}. 

\begin{lemma}\label{lemma:tech:2}
    Let $A,B\in\mathbb{R}^{\ell\times\ell}$ be matrices such that $|A_{ij}|\leq a$ and $|A_{ij}-B_{ij}|\leq Ct$ for all $1\leq i,j\leq \ell$ and real numbers $a>0$, $C\geq 1$, and $t\in(0,1]$. Then we have 
    \begin{align*}
        \big|\det(B)-\det(A)\big|\leq (C(a+1))^\ell\ell!t.
    \end{align*}
\end{lemma}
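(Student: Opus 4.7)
The statement is a standard perturbation bound for determinants, and the plan is to prove it by brute force via the Leibniz formula together with a telescoping estimate on each monomial. Writing $D = B-A$ so that $|D_{ij}|\leq Ct$ and $|B_{ij}|\leq |A_{ij}|+|D_{ij}|\leq a+Ct$, I first expand
\[
\det(B)-\det(A)=\sum_{\sigma\in S_\ell}\sgn(\sigma)\Bigl(\prod_{i=1}^\ell B_{i,\sigma(i)}-\prod_{i=1}^\ell A_{i,\sigma(i)}\Bigr).
\]
This reduces the problem to bounding, uniformly in $\sigma$, the difference of two products of $\ell$ scalars.

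For each $\sigma$, the key step is the telescoping identity
\[
\prod_{i=1}^\ell B_{i,\sigma(i)}-\prod_{i=1}^\ell A_{i,\sigma(i)}=\sum_{k=1}^\ell\Bigl(\prod_{i<k}B_{i,\sigma(i)}\Bigr)\,D_{k,\sigma(k)}\,\Bigl(\prod_{i>k}A_{i,\sigma(i)}\Bigr),
\]
whose $k$-th summand is bounded in absolute value by $(a+Ct)^{k-1}\cdot Ct\cdot a^{\ell-k}$. Summing over $k$ gives the clean estimate $(a+Ct)^\ell-a^\ell\leq \ell\, Ct\,(a+Ct)^{\ell-1}$.

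Finally, I use the hypotheses $t\leq 1$ and $C\geq 1$ to simplify $a+Ct\leq a+C\leq C(a+1)$ (since $a\leq Ca$), so that each permutation contributes at most $\ell\, Ct\,(C(a+1))^{\ell-1}$. Summing over the $\ell!$ permutations yields an estimate of the form
\[
|\det(B)-\det(A)|\leq \ell!\cdot \ell\cdot Ct\cdot (C(a+1))^{\ell-1},
\]
and absorbing the remaining factor of $\ell C$ into one more copy of $C(a+1)$ gives the claimed bound $(C(a+1))^\ell\ell!\,t$. There is no real obstacle here; the whole argument is bookkeeping around the Leibniz expansion, and the only place where one must pay attention is in ensuring that the single factor of $D$ (and hence of $Ct$) is extracted cleanly from each telescoped summand so that the $t$ on the right-hand side is preserved.
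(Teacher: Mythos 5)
Your overall strategy (Leibniz expansion plus a per-permutation bound on the difference of products) is exactly what the paper has in mind — it omits the proof, saying only that it ``follows from an elementary computation using the Leibniz formula for determinants and the binomial formula.'' However, your final absorption step is a genuine gap. After the telescoping you arrive at the per-permutation bound $(a+Ct)^\ell-a^\ell\leq \ell\,Ct\,(a+Ct)^{\ell-1}$, and you then claim that the leftover factor $\ell C$ can be absorbed into one more copy of $C(a+1)$. That requires $\ell C\leq C(a+1)$, i.e.\ $\ell\leq a+1$, which is false whenever $a$ is small and $\ell$ is large; and the failure is not just an artifact of a lossy chain of inequalities, since e.g.\ for $C=1$, $t=1$, $a=1/2$, $\ell=10$ your intermediate bound $\ell Ct(a+Ct)^{\ell-1}=10\cdot 1.5^{9}\approx 384$ genuinely exceeds the target $(C(a+1))^\ell t=1.5^{10}\approx 58$. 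So the mean-value-type estimate you use discards too much.

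The fix is short and is precisely the ``binomial formula'' the paper alludes to: instead of the mean-value bound, expand your own intermediate quantity binomially and use that $t\in(0,1]$ forces $t^k\leq t$ for every $k\geq 1$:
\begin{align*}
(a+Ct)^\ell-a^\ell=\sum_{k=1}^{\ell}\binom{\ell}{k}(Ct)^k a^{\ell-k}\leq t\sum_{k=1}^{\ell}\binom{\ell}{k}C^k a^{\ell-k}\leq t\,(a+C)^\ell\leq t\,(C(a+1))^\ell,
\end{align*}
where the last step uses $C\geq 1$ so that $a+C\leq C(a+1)$. Summing over the $\ell!$ permutations then gives the stated bound with no extraneous factor of $\ell$. (Equivalently, one can expand $\det(A+D)$ by multilinearity over subsets of rows and bound each mixed determinant by $\ell!\,(Ct)^{|S|}a^{\ell-|S|}$, which leads to the same binomial sum.) Everything before that last step of yours — the Leibniz expansion, the telescoping identity, and the evaluation of the telescoped sum as $(a+Ct)^\ell-a^\ell$ — is correct.
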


Lemma \ref{lemma:tech:2} follows from an elementary computation using the Leibniz formula for determinants and the binomial formula, and is omitted. We now turn to the proof of Proposition \ref{prop:bias}.

\begin{proof}[Proof of Proposition \ref{prop:bias}]
    By the definition of $\Delta_{\ell-1}^{\up}$ and the properties of the inner product $\langle\cdot,\cdot\rangle$ on $\Omega^{\ell-1}(\mathcal{M})$ defined in \Cref{section:LapMfld}, we have
    \begin{align*}
        \langle\omega,\Delta_{\ell-1}^{\up}\omega\rangle&=\langle d_{\ell-1}\omega,d_{\ell-1}\omega\rangle=\langle df_1\wedge\dots\wedge df_\ell,df_1\wedge\dots\wedge df_\ell\rangle=\int_{\mathcal{M}}\det_{\ell\times\ell}\big(\langle df_a,df_b\rangle_x\big)dx
    \end{align*}
    By Lemma \ref{lemma:tech:1}, \eqref{eq:chain:rule}, and Assumption \ref{Ass:SP}, we have 
    \begin{align*}
            &\Big|\langle df_a,df_b\rangle_x- \frac{1}{2t}\int_\mathcal{M}k_t(x,y)\delta f_a(x,y)\delta f_b(x,y)\,dy\Big|\leq Ct, \ \text{ and }\ \big|\langle df_a,df_b\rangle_x\big|\leq C,
    \end{align*}
    for all $1\leq a,b\leq \ell$ and some constant $C>0$ depending only on $C_2$. Hence, Lemma \ref{lemma:tech:2}
 yields
 \begin{align}\label{Proof:approximation:error:1}
     \Big|\det_{\ell\times \ell}\big(\langle df_a,df_b\rangle_x\big)-\det_{\ell\times \ell}\Big(\frac{1}{2t}\int_\mathcal{M}\delta f_a(x,y)\delta f_b(x,y)k_t(x,y) dy\Big)\Big|\leq Ct,
 \end{align}
 where $C>0$ depends only on $C_2$. By Andréief's identity 
 \begin{align}\label{Proof:approximation:error:2}
     \det_{\ell\times \ell}\Big(\frac{1}{2t}\int_\mathcal{M}\delta f_a(x,y)\delta f_b(x,y)k_t(x,y)dy\Big)=\frac{1}{\ell !}\frac{1}{(2t)^\ell}\int_{\mathcal{M}^\ell}\det_{\ell\times\ell}\big( \delta_0f_a(x,x_b)\big)^2\Big(\prod_{b=1}^\ell k_t(x,x_b)dx_b\Big).
 \end{align}
 Inserting \eqref{Proof:approximation:error:2} into \eqref{Proof:approximation:error:1}, the claim follows from integrating with respect to $x$ and the triangle inequality.
 \end{proof}

\section{The variance term: concentration of U-statistics}\label{section:variance}

\subsection{Main concentration bound}

In this section we study the stochastic error $\langle\boldsymbol{\omega},\mathscr{L}_{\ell-1}^{\up}\boldsymbol{\omega}\rangle_n-\mathbb{E} \langle\boldsymbol{\omega},\mathscr{L}_{\ell-1}^{\up}\boldsymbol{\omega}\rangle_n$ using the theory of U-statistics \cite{Lee90,PG99}. More precisely, we analyze the expression 
\begin{align}\label{eq:U:statistics}
    U_{n}(\ell,t)=\frac{1}{\binom{n}{\ell+1}}\sum_{1\leq i_0<\dots<i_\ell\leq n}h_t(X_{i_0},\dots,X_{i_\ell})
\end{align}
with 
\begin{align*}
    &h_t(x_0,\dots,x_\ell)=\Big(\frac{1}{\ell+1}\sum_{a=0}^\ell\prod_{\substack{b=0\\b\neq a}}^\ell\frac{1}{t}k_t(x_a,x_b)\Big)\cdot (D(f_1,\dots,f_\ell,x_0,\dots,x_\ell))^2
\end{align*}
and
\begin{align*}
    D(f_1,\dots,f_\ell,x_0,\dots,x_\ell)=\det_{\ell\times\ell}(\delta f_a(x_0,x_b)).
\end{align*}
The main result of this section is the following concentration bound.

\begin{proposition}\label{Prop:conc:U:statistic}
    Let $t\in(0,1]$, $A>1$, and $f_1,\dots,f_\ell\in C^\infty(\mathcal{M})$. Suppose that $n\geq 2+2\ell$ and that Assumptions \ref{Ass:MH}--\ref{Ass:SP} are satisfied. Then, with probability at least $1-n^{-A}$, we have
    \begin{align*}
        \Big| U_{n}(\ell,t)&-\frac{1}{t^\ell}\int_{\mathcal{M}^{\ell+1}}(D(f_1,\dots,f_\ell,x_0,\dots,x_\ell))^2\Big(\prod_{b=1}^\ell k_t(x_0,x_b)dx_b\Big)dx_0 \Big|\\
        &\leq C\sum_{j=1}^{\ell+1}\Big(\frac{(\log n)^{j/2}}{t^{d(j-1)/4}n^{j/2}}+\frac{(\log n)^{(j+1)/2}}{t^{d(j-1)/2}n^{(j+1)/2}}\Big),
    \end{align*}
    where $C$ is a constant depending only on $\ell, A, c_1,C_1$ and $C_2$.
\end{proposition}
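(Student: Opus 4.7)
The plan is to apply a Hoeffding decomposition to $U_n(\ell,t)-\mathbb{E}U_n(\ell,t)$ and then use the concentration inequality for completely degenerate U-statistics from \cite{Mins24}, supplied with sharp $t$-dependent estimates of the moment norms of the Hoeffding kernels.

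A first observation is that the integral on the right-hand side of the proposition is exactly $\mathbb{E}U_n(\ell,t)$. The kernel
\[
h_t(x_0,\dots,x_\ell) = \Bigl(\tfrac{1}{\ell+1}\sum_{a=0}^\ell\prod_{b\neq a}\tfrac{k_t(x_a,x_b)}{t}\Bigr)D(f_1,\dots,f_\ell,x_0,\dots,x_\ell)^2
\]
is symmetric in its arguments (the average $\frac{1}{\ell+1}\sum_a$ symmetrizes the choice of basepoint, and $D^2$ is symmetric as the square of an alternating determinant), so the symmetrized average collapses to fixing $x_0$ as the basepoint and gives precisely the stated integral. Thus the problem reduces to a concentration bound on $|U_n(\ell,t)-\mathbb{E}U_n(\ell,t)|$, which I would write using the Hoeffding decomposition
\[
U_n(\ell,t)-\mathbb{E}U_n(\ell,t) = \sum_{j=1}^{\ell+1}\binom{\ell+1}{j}\,U_n^{(j)},
\]
where $U_n^{(j)}$ is a completely degenerate U-statistic of order $j$ with symmetric kernel $h_t^{(j)}$ obtained as the usual alternating sum of conditional expectations of $h_t$.

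The central analytic step is to estimate, in terms of $t$, the $L^2$, $L^\infty$, and intermediate partition norms of $h_t^{(j)}$ that enter the Minsker bound. From Assumption~\ref{Ass:SP} with $f_0\equiv 1$ one obtains $\|\Delta f_a\|_{L^\infty}\le C_2$; combined with $\|\Delta(f_a^2)\|_{L^\infty}\le C_2$ and the identity $\Delta(f_a^2)=2f_a\Delta f_a-2|\nabla f_a|^2$ this yields the Lipschitz bound $|\delta f_a(x,y)|\le Cd_\mathcal{M}(x,y)$, so $D^2$ contributes a factor of order $t^\ell$ inside the Gaussian peaks of width $\sqrt{t}$, exactly cancelling the $1/t^\ell$ prefactor of $h_t$. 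After a change of variables $y_b=(x_b-x_0)/\sqrt{t}$ and standard Gaussian moment computations based on Assumption~\ref{Ass:GHB}, I expect
\[
\|h_t^{(j)}\|_{L^\infty}\le Ct^{-d(j-1)/2},\qquad \|h_t^{(j)}\|_{L^2}\le Ct^{-d(j-1)/4},
\]
and an analogous family of partition-indexed bounds. The main obstacle is the combinatorial bookkeeping when integrating out $\ell+1-j$ of the arguments of $h_t$: in each of the $\ell+1$ summands a different argument plays the role of basepoint, and one must verify that after integration the worst case leaves at most $j-1$ remaining spiky heat kernel factors and that the cancellation provided by the determinant survives all these symmetrizations.

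Finally, I would invoke the concentration inequality of \cite{Mins24} for completely degenerate U-statistics of order $j$. It yields, for each $u>0$ and each $j\in\{1,\dots,\ell+1\}$, with probability at least $1-e^{-u}$, a bound of the form
\[
|U_n^{(j)}|\le C\Bigl(\frac{\|h_t^{(j)}\|_{L^2}\,u^{j/2}}{n^{j/2}}+\frac{\|h_t^{(j)}\|_{L^\infty}\,u^{(j+1)/2}}{n^{(j+1)/2}}\Bigr),
\]
after absorbing the partition-indexed intermediate terms via interpolation between $L^2$ and $L^\infty$. Taking $u=(A+2)(\ell+2)\log n$, applying a union bound over $j=1,\dots,\ell+1$ and inserting the kernel-norm estimates above produces exactly the two families of terms appearing in the claim.
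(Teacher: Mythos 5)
Your proposal follows essentially the same route as the paper: Hoeffding decomposition into completely degenerate U-statistics, $L^2$/$L^\infty$ bounds on the projected kernels with exactly the exponents $t^{-d(j-1)/4}$ and $t^{-d(j-1)/2}$ (obtained, as in the paper, from the Lipschitz bound via $\Delta(f_af_b)$ and the Gaussian heat kernel estimate), and a Minsker-style moment bound (which the paper re-derives via symmetrization, the Bonami inequality and a blocking argument, so only the two norms $\|h_t^{(j)}\|_{L^2}$ and $\|h_t^{(j)}\|_{L^\infty}$ appear and no partition-norm interpolation is needed) followed by Markov with $p=\log n$. The argument is correct and matches the paper's proof in structure and in all key intermediate estimates.
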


\subsection{The Hoeffding decomposition}

First, note that $h_t$ is symmetric, so that expression \eqref{eq:U:statistics} is an U-statistic of order $\ell+1$. Since $h_t$ is not degenerate, we have to apply the Hoeffding decomposition before we can proceed with the proof of Proposition \ref{Prop:conc:U:statistic}. 

We start with some notation. For a function $f$ on $\mathcal{M}$ we write $Pf=\mathbb{E}f(X)=\int_\mathcal{M}f(x)\,dx$. For a symmetric function $h:\mathcal{M}^{\ell+1}\rightarrow \mathbb{R}$ and $0\leq j\leq \ell$, we write 
\begin{align*}
    P^{\ell-j}h(x_0,\dots,x_j)&=\mathbb{E}f(x_0,\dots,x_j,X_{j+1},\dots,X_\ell)=\int_{\mathcal{M}^{\ell-j}}f(x_0,\dots,x_\ell)\,dx_{j+1}\dots dx_\ell.
\end{align*}
We set
\begin{align*}
    h_t^{(j)}=(\delta_{x_0}-P)\times \dots\times (\delta_{x_j}-P) \times P^{\ell-j} h_t,\qquad x_0,\dots,x_j\in\mathcal{M}.
\end{align*}
Then $h_t^{(j)}$ is a symmetric and degenerate kernel, that is
\begin{align*}
    \mathbb{E} h_t^{(j)}(x_0,\dots,x_{j-1},X_j)=\int_\mathcal{M}h_t^{(j)}(x_0,\dots,x_{j-1},x_j)\,dx_j=0
\end{align*}
for all $x_0,\dots,x_{j-1}\in\mathcal{M}$ and the Hoeffding decomposition, see \cite[Section 3.5]{PG99} or \cite{Lee90}, implies that 
\begin{align}\label{eq:Hoeffding:decomposition}
    &\frac{1}{\binom{n}{\ell+1}}\sum_{1\leq i_0<\dots<i_\ell\leq n}h_t(X_{i_0},\dots,X_{i_\ell})-\int_{\mathcal{M}^{\ell+1}}h_t(x_0,\dots,x_\ell)\,dx_{0}\dots dx_\ell\\
    &=\sum_{j=0}^\ell \frac{\binom{\ell+1}{j+1}}{\binom{n}{j+1}}\sum_{1\leq i_0<\dots<i_j\leq n}h_t^{(j)}(X_{i_0},\dots,X_{i_j}). \nonumber
\end{align}
Here, the expressions 
\begin{align}
    \frac{1}{\binom{n}{j+1}}\sum_{1\leq i_0<\dots<i_j\leq n}h_t^{(j)}(X_{i_0},\dots,X_{i_j})
\end{align}
are degenerate U-statistics, to which we can apply the machinery of U-statistics \cite{PG99,Mins24}, provided that we have upper bounds for $\|h_t^{(j)}\|_{L^\infty}$ and $\|h_t^{(j)}\|_{L^2}$.

\subsection{Bounding the degenerate kernel}

The following preliminary lemma combines smoothness properties of the functions with heat kernel estimates.

\begin{lemma}\label{lemma:bound:kernel:smoothness}
    Under the assumptions of Proposition \ref{Prop:conc:U:statistic}, we have 
    \begin{align*}
        \frac{1}{t}k_t(x,y)(f_b(x)-f_b(y))^2\leq C\frac{1}{t^{d/2}}
    \end{align*}
    and
    \begin{align*}
        \int_{\mathcal{M}}\frac{1}{t}k_t(x,y)(f_b(x)-f_b(y))^2\,dy\leq C
    \end{align*}
    for all $x,y\in\mathcal{M}$, all $b=1,\dots,\ell$, and all $t\in(0,1]$, where $C$ is a constant depending only on $c_1,C_1,C_2$.
\end{lemma}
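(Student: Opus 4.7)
The plan is to treat the two inequalities separately. For the pointwise bound, I combine the Gaussian heat kernel estimate of Assumption \ref{Ass:GHB} with a Lipschitz estimate on $f_b$ whose constant is controlled by $C_2$. For the integral bound, I reuse the semigroup identity that already appeared in the proof of Lemma \ref{lemma:tech:1}, which rewrites the left-hand side directly in terms of quantities controlled by Assumption \ref{Ass:SP}.

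For the pointwise bound, I first extract a Lipschitz estimate for $f_b$ from the product-rule identity
\[
\langle df_b, df_b\rangle_x = \tfrac{1}{2}\bigl(2 f_b \Delta f_b - \Delta(f_b^2)\bigr)(x),
\]
which was already used in \eqref{eq:chain:rule}. Taking $a = 0$ and $a=b$ in Assumption \ref{Ass:SP} (with $f_0 \equiv 1$) gives $\|\Delta f_b\|_{L^\infty}, \|\Delta(f_b^2)\|_{L^\infty}\leq C_2$, so $|\nabla f_b|_x \leq L$ pointwise for some $L$ depending only on $C_2$. Since $\mathcal{M}$ is closed and connected, integration along a minimizing geodesic yields the global bound $|f_b(x)-f_b(y)| \leq L\, d_\mathcal{M}(x,y)$. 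Substituting this together with Assumption \ref{Ass:GHB} gives
\[
\tfrac{1}{t} k_t(x,y)(f_b(x)-f_b(y))^2 \leq \frac{C_1 L^2}{t^{1+d/2}}\, d_\mathcal{M}(x,y)^2 \exp\!\Bigl(-c_1 \tfrac{d_\mathcal{M}(x,y)^2}{t}\Bigr),
\]
and the elementary inequality $u e^{-c_1 u}\leq (c_1 e)^{-1}$ applied with $u = d_\mathcal{M}(x,y)^2/t$ absorbs the extra factor of $t$, yielding the claimed $C/t^{d/2}$.

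For the integral bound, I specialize the decomposition of the term $(*)$ in the proof of Lemma \ref{lemma:tech:1} to $f_a = f_b$, which produces
\[
\tfrac{1}{t}\int_\mathcal{M} k_t(x,y)\bigl(f_b(x)-f_b(y)\bigr)^2\,dy = -2 f_b(x) \Bigl(\tfrac{e^{-t\Delta}-I}{t}f_b\Bigr)(x) + \Bigl(\tfrac{e^{-t\Delta}-I}{t}(f_b^2)\Bigr)(x).
\]
Writing $\tfrac{e^{-t\Delta}-I}{t}g = -\Delta g + t\cdot \tfrac{e^{-t\Delta}-I+t\Delta}{t^2}g$ and applying Assumption \ref{Ass:SP} to $g = f_b$ and $g = f_b^2$, each summand on the right is bounded in $L^\infty$ by a constant depending only on $C_2$, uniformly in $t \in (0,1]$. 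Combined with $\|f_b\|_{L^\infty}\leq C_2$, this gives the claim.

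The only step that is not pure algebra plus direct invocation of the assumptions is the passage from a pointwise gradient bound to a global Lipschitz estimate on $f_b$, which relies on $\mathcal{M}$ being closed and connected so that any two points are joined by a length-minimizing geodesic. This is standard but is the one place where the Riemannian geometry of $\mathcal{M}$, rather than the stated analytic inequalities, enters; I expect this to be the main, if minor, obstacle.
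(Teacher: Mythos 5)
Your proof is correct and follows essentially the same route as the paper: the pointwise bound via the product-rule gradient estimate, the resulting Lipschitz bound, Assumption \ref{Ass:GHB}, and the inequality $ue^{-c_1u}\leq(c_1e)^{-1}$; and the integral bound via the semigroup identity and Assumption \ref{Ass:SP}. Your explicit decomposition $\frac{e^{-t\Delta}-I}{t}g=-\Delta g+t\cdot\frac{e^{-t\Delta}-I+t\Delta}{t^2}g$ merely spells out a step the paper leaves implicit.
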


\begin{proof}
    First, by Assumption \ref{Ass:SP} and \eqref{eq:chain:rule}, we have $\|\nabla f_b\|_x^2=\langle \nabla f_b,\nabla f_b\rangle_x\leq C_2^2+C_2/2$ for all $x\in\mathcal{M}$. From this it follows that $f_b$ is a Lipschitz function on $(\mathcal{M},d_{\mathcal{M}})$ with Lipschitz constant bounded by $C_2^2+C_2/2$.
    From this and Assumptions \ref{Ass:GHB}, we get
    \begin{align*}
        \frac{1}{t}k_t(x,y)(f_b(x)-f_b(y))^2&\leq \frac{C_1(C_2^2+C_2/2)}{t^{d/2}}\exp\Big(-c_1\frac{d_{\mathcal{M}}^2(x,y)}{t}\Big)\frac{d_{\mathcal{M}}^2(x,y)}{t}\leq \frac{C_1(C_2^2+C_2/2)}{ec_1}\frac{1}{t^{d/2}},
    \end{align*}
    where we used the inequality $xe^{-x}\leq e^{-1}$, $x\geq 0$. Second, by Assumption \ref{Ass:SP}, we have
    \begin{align*}
        &\int_{\mathcal{M}}\frac{1}{t}k_t(x,y)(f_b(x)-f_b(y))^2\,dy=\Big(\Big(\frac{e^{-t\Delta}-I}{t}\Big)f_b^2-2f_b\Big(\frac{e^{-t\Delta}-I}{t}\Big)f_b\Big)(x)\leq C_2+2C_2^2.
    \end{align*} 
    This completes the proof.
\end{proof}

\begin{lemma}\label{lemma:bounds:apply:U:concentation}
    Suppose that the assumptions of Proposition \ref{Prop:conc:U:statistic} are satisfied. Let $J\subseteq \{0,\dots,\ell\}$ be a nonempty subset. Then we have
    \begin{align}\label{eq:bounds:apply:U:concentation:1}
    \int_{\mathcal{M}^{|J^\complement|}}\Big(\frac{1}{t^\ell}\prod_{b=1}^\ell k_t(x_0,x_b)\Big) D^2(f_1,\dots,f_\ell,x_0,\dots,x_\ell)\, dx_{J^\complement}\leq C\frac{1}{t^{d(|J|-1)/2}}    
    \end{align}
    for all $(x_b)_{b\in J}$ and
    \begin{align}\label{eq:bounds:apply:U:concentation:2}
    &\Big(\int_{\mathcal{M}^{|J|}}\Big(\int_{\mathcal{M}^{|J^\complement|}}\Big(\frac{1}{t^\ell}\prod_{b=1}^\ell k_t(x_0,x_b)\Big) D^2(f_1,\dots,f_\ell,x_0,\dots,x_\ell)\, dx_{J^\complement}\Big)^2dx_J\Big)^{1/2}\leq C\frac{1}{t^{d(|J|-1)/4}},
    \end{align}
    where $C>0$ is a constant depending only on $c_1,C_1,C_2$ and $\ell$. Here, $dx_{J^\complement}$ means $\prod_{b\in J^\complement}dx_b$ and $dx_J$ means $\prod_{b\in J}dx_b$.
\end{lemma}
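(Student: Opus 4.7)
The plan is to reduce both estimates to sums of products of the two-variable kernels $g_a(x,y):=t^{-1}k_t(x,y)(f_a(x)-f_a(y))^2$, for which \Cref{lemma:bound:kernel:smoothness} gives the pointwise bound $g_a(x,y)\leq Ct^{-d/2}$ and the marginal integral bound $\int_{\mathcal{M}}g_a(x,y)\,dy\leq C$; by symmetry of $k_t$ the latter also holds when integrated in $x$. Expanding the determinant via the Leibniz formula and applying Cauchy-Schwarz yields
\begin{align*}
D(f_1,\dots,f_\ell,x_0,\dots,x_\ell)^2\leq \ell!\sum_{\sigma\in S_\ell}\prod_{a=1}^\ell\big(\delta f_a(x_0,x_{\sigma(a)})\big)^2,
\end{align*}
and since $\sigma$ is a bijection of $\{1,\dots,\ell\}$ we may rewrite $\prod_{b=1}^\ell k_t(x_0,x_b)=\prod_{a=1}^\ell k_t(x_0,x_{\sigma(a)})$ inside the sum to obtain
\begin{align*}
\frac{1}{t^\ell}\prod_{b=1}^\ell k_t(x_0,x_b)\,D^2\leq \ell!\sum_{\sigma\in S_\ell}\prod_{a=1}^\ell g_a(x_0,x_{\sigma(a)}).
\end{align*}
All subsequent work reduces to integrating this sum over various subsets of the variables.

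For \eqref{eq:bounds:apply:U:concentation:1} I integrate each $\sigma$-summand over $dx_{J^\complement}$, distinguishing whether $0\in J$ or $0\notin J$. If $0\in J$, then $x_0$ is fixed: for each $b\in J^\complement\subseteq\{1,\dots,\ell\}$ the factor $g_{\sigma^{-1}(b)}(x_0,x_b)$ is integrated using $\int g_a(x_0,\cdot)\leq C$, while for each of the remaining $|J|-1$ indices $b\in J\setminus\{0\}$ the factor is bounded pointwise by $Ct^{-d/2}$. If $0\notin J$, I first integrate $x_b$ for $b\in J^\complement\setminus\{0\}$ using the same marginal bound on the corresponding factors, reducing to $\int_{\mathcal{M}}\prod_{a:\,\sigma(a)\in J}g_a(x_0,x_{\sigma(a)})\,dx_0$; of these $|J|$ factors I bound $|J|-1$ pointwise by $Ct^{-d/2}$ and integrate the peeled factor in $x_0$ using $\int g_a(\cdot,y)\,dx\leq C$. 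In both cases the result is $Ct^{-d(|J|-1)/2}$, and the sum over the $\ell!$ permutations is absorbed into the constant.

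For \eqref{eq:bounds:apply:U:concentation:2} I apply the interpolation inequality $\|h\|_{L^2(\mathcal{M}^{|J|})}^2\leq \|h\|_{L^\infty}\|h\|_{L^1}$ to $h(x_J):=\int_{\mathcal{M}^{|J^\complement|}}t^{-\ell}\prod_b k_t(x_0,x_b)\,D^2\,dx_{J^\complement}$. The $L^\infty$ bound is precisely \eqref{eq:bounds:apply:U:concentation:1}, while $\|h\|_{L^1}$ equals the full integral over $\mathcal{M}^{\ell+1}$; the same $g_a$-reduction together with integrating every variable (each factor contributing $C$ and $\int dx_0=1$) bounds this by a constant depending only on $\ell$ and $c_1,C_1,C_2$. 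Combining yields $\|h\|_{L^2}\leq Ct^{-d(|J|-1)/4}$ as claimed.

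The main obstacle is the case $0\notin J$ in the pointwise bound: since $x_0$ is integrated but appears in every factor $g_a(x_0,\cdot)$, naively bounding all $|J|$ factors with $\sigma(a)\in J$ pointwise by $Ct^{-d/2}$ would give the wrong exponent $|J|$ in place of $|J|-1$. The crucial gain comes from peeling off exactly one such factor, bounding only the other $|J|-1$ pointwise, and then invoking the symmetry of $k_t$ together with $\int g_a(\cdot,y)\,dx\leq C$ to integrate the peeled factor in $x_0$.
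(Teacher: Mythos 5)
Your proposal is correct and follows essentially the same route as the paper: the same Leibniz-plus-Cauchy--Schwarz expansion of $D^2$ into $\ell!\sum_\sigma\prod_a g_a(x_0,x_{\sigma(a)})$, the same case split on whether $0\in J$, and the same key "peeling" of exactly one factor to be integrated in $x_0$ (via the symmetry of $k_t$) when $0\notin J$. The only cosmetic difference is that you obtain the $L^2$ bound via the interpolation $\|h\|_{L^2}^2\leq\|h\|_{L^\infty}\|h\|_{L^1}$, which is a slightly cleaner packaging of the paper's argument that bounds the squared inner integral by its supremum times its total integral.
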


\begin{proof}
    Using the Leibniz formula applied to the transpose, we have
    \begin{align}\label{eq:proof:bounds:apply:U:concentation:1}
        \begin{split}
        D^2(f_1,\dots,f_\ell,x_0,\dots,x_\ell) &=\sum_{\sigma,\tau\in S_\ell}\prod_{b=1}^\ell (f_{\sigma(b)}(x_b)-f_{\sigma(b)}(x_0))(f_{\tau(b)}(x_b)-f_{\tau(b)}(x_0))\\
        &\leq \ell! \sum_{\sigma\in S_\ell}\prod_{b=1}^\ell (f_{\sigma(b)}(x_b)-f_{\sigma(b)}(x_0))^2,
        \end{split}
    \end{align}
    where we used the inequality $xy\leq (x^2+y^2)/2$ and the symmetry in $\sigma,\tau\in S_\ell$. We now consider separately the two cases $0\in J$ and $0\notin J$. 
    
    First, let $0\in J$. Inserting \eqref{eq:proof:bounds:apply:U:concentation:1} into \eqref{eq:bounds:apply:U:concentation:1} and using the Fubini theorem, we get
    \begin{align}
        &\int_{\mathcal{M}^{|J^\complement|}}\Big(\frac{1}{t^\ell}\prod_{b=1}^\ell k_t(x_0,x_b)\Big)\cdot D^2(f_1,\dots,f_\ell,x_0,\dots,x_\ell)\, dx_{J^\complement}\label{eq:proof:bounds:apply:U:concentation:2}\\
        &\leq \ell !\sum_{\sigma\in S_\ell}\int_{\mathcal{M}^{|J^\complement|}}\prod_{b=1}^\ell\frac{1}{t} k_t(x_0,x_b) (f_{\sigma(b)}(x_b)-f_{\sigma(b)}(x_0))^2 dx_{J^\complement}\nonumber\\
        &=\ell !\sum_{\sigma\in S_\ell} \prod_{\substack{b\in J\\ b\neq 0}}\frac{1}{t} k_t(x_0,x_b) (f_{\sigma(b)}(x_b)-f_{\sigma(b)}(x_0))^2\cdot\prod_{b\in J^\complement}\int_{\mathcal{M}}\frac{1}{t} k_t(x_0,x_b) (f_{\sigma(b)}(x_b)-f_{\sigma(b)}(x_0))^2dx_b\Big).\nonumber
    \end{align}
    Inserting Lemma \ref{lemma:bound:kernel:smoothness} the first claim follows in this case. Similarly, 
    \begin{align*}
        &\int_{\mathcal{M}^{|J|}}\Big(\int_{\mathcal{M}^{|J^\complement|}}\Big(\frac{1}{t^\ell}\prod_{b=1}^\ell k_t(x_0,x_b)\Big)\cdot D^2(f_1,\dots,f_\ell,x_0,\dots,x_\ell)\, dx_{J^\complement}\Big)^2dx_J\\
        & \leq (\ell!)^3 \sum_{\sigma\in S_\ell}\int_{\mathcal{M}^{|J|}}\Big(\int_{\mathcal{M}^{|J^\complement|}} \prod_{b=1}^\ell\frac{1}{t} k_t(x_0,x_b) (f_{\sigma(b)}(x_b)-f_{\sigma(b)}(x_0))^2\, dx_{J^\complement}\Big)^2dx_J.
    \end{align*}
    Proceeding as in \eqref{eq:proof:bounds:apply:U:concentation:2}, applying Lemma \ref{lemma:bound:kernel:smoothness} and using the fact that we can also integrate with respect to $x_J$,  the second claim follows. 
    
    Second, let $0\in J^\complement$. Moreover, let $a\in J$ be arbitrary but fixed. Then 
    \begin{align*}
        \int_{\mathcal{M}^{|J^\complement|}}
         \Big( \frac{1}{t^\ell}\prod_{b=1}^\ell k_t(x_0,x_b)\Big)&
        \cdot D^2(f_1,\dots,f_\ell,x_0,\dots,x_\ell)\, dx_{J^\complement}
        \\\leq \ell !\sum_{\sigma\in S_\ell}\int_{\mathcal{M}}\Big[&\prod_{\substack{b\in J\setminus\{a\}}}\frac{1}{t} k_t(x_0,x_b) (f_{\sigma(b)}(x_b)-f_{\sigma(b)}(x_0))^2  
        \\\cdot&\prod_{b\in J^c\setminus\{0\}}\int_{\mathcal{M}}\frac{1}{t} k_t(x_0,x_b) (f_{\sigma(b)}(x_b)-f_{\sigma(b)}(x_0))^2dx_b\\
          \cdot & \frac{1}{t} k_t(x_0,x_a) (f_{\sigma(a)}(x_a)-f_{\sigma(a)}(x_0))^2\Big]\,dx_0
    \end{align*}
    Inserting Lemma \ref{lemma:bound:kernel:smoothness}, the first claim follows in this case, ensuring that we integrate with respect to the $x_0$ in the last step. Similarly,
    \begin{align*}
        &\int_{\mathcal{M}^{|J|}}  \Big[ \int_{\mathcal{M}^{|J^\complement|}} \Big( \frac{1}{t^\ell}\prod_{b=1}^\ell k_t(x_0,x_b) \Big) \cdot D^2(f_1,\dots,f_\ell,x_0,\dots,x_\ell)\, dx_{J^\complement} \Big]^2dx_J\\
        & \leq (\ell!)^3 \sum_{\sigma\in S_\ell}\int_{\mathcal{M}^{|J|}} \Big[\int_{\mathcal{M}^{|J^\complement|}} \Big( \prod_{b=1}^\ell\frac{1}{t} k_t(x_0,x_b) (f_{\sigma(b)}(x_b)-f_{\sigma(b)}(x_0))^2\Big) \, dx_{J^\complement}\Big]^2dx_J\\
        & \leq C(\ell!)^3 \sum_{\sigma\in S_\ell} \int_{\mathcal{M}^{|J|+1}}
        \Big[\int_{\mathcal{M}^{|J^\complement|-1}} \prod_{\substack{b=1\\b\neq a}}^\ell\frac{1}{t} k_t(x_0,x_b) (f_{\sigma(b)}(x_b)-f_{\sigma(b)}(x_0))^2\, dx_{J^\complement\setminus\{0\}}\Big]^2\\
        &\qquad \qquad\qquad\qquad\quad   \cdot\frac{1}{t} k_t(x_0,x_a) (f_{\sigma(a)}(x_a)-f_{\sigma(a)}(x_0))^2dx_0dx_J,
    \end{align*} 
where we applied the Cauchy-Schwarz inequality and Lemma \ref{lemma:bound:kernel:smoothness} in the last inequality. Now, we can proceed as in the first case.
\end{proof}

\begin{corollary}\label{corollary:bound:kernel:HD}
    Suppose that the assumptions of Proposition \ref{Prop:conc:U:statistic} are satisfied. Then, for each $j=0,\dots,\ell$, 
    \begin{align*}
        \|h_t^{(j)}\|_{L^\infty}&\leq C\frac{1}{t^{dj/2}},\\
        \|h_t^{(j)}\|_{L^2}&\leq C\frac{1}{t^{dj/4}}.
    \end{align*}
\end{corollary}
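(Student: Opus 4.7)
The plan is to reduce both bounds to Lemma \ref{lemma:bounds:apply:U:concentation} by explicitly expanding the Hoeffding projection into a finite sum of partial integrals of $h_t$ and estimating each term. Expanding the product in the definition
\begin{align*}
    h_t^{(j)}=(\delta_{x_0}-P)\times\cdots\times(\delta_{x_j}-P)\times P^{\ell-j}h_t
\end{align*}
gives
\begin{align*}
    h_t^{(j)}(x_0,\dots,x_j)=\sum_{J\subseteq\{0,\dots,j\}}(-1)^{(j+1)-|J|}h_t^J(x_J),
\end{align*}
where $h_t^J$ denotes the function of $x_J$ obtained by integrating $h_t$ against the uniform measure over the $\ell+1-|J|$ arguments not indexed by $J$. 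Since the number of such subsets is $2^{j+1}$, independent of $t$, both norms can be controlled summand by summand via the triangle, respectively Minkowski, inequality.

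For each subset $J$, I would first invoke symmetry: the function $h_t$ is invariant under permutations of its $\ell+1$ arguments, because the squared determinant $D^2$ is symmetric in $(x_0,\dots,x_\ell)$ as the square of an alternating quantity (cf.\ Lemma \ref{lemma:derivative:empirical:l:form:det}), and the prefactor $\frac{1}{\ell+1}\sum_a\prod_{b\neq a}k_t(x_a,x_b)/t$ is visibly symmetric. By relabelling we may therefore assume $J=\{0,1,\dots,|J|-1\}$. Decomposing $h_t$ into its $\ell+1$ center summands and applying Lemma \ref{lemma:bounds:apply:U:concentation} to each (with the role of $x_0$ in the lemma played by the chosen center, after a further relabelling) yields, from \eqref{eq:bounds:apply:U:concentation:1}, the pointwise bound $|h_t^J(x_J)|\leq Ct^{-d(|J|-1)/2}$ and, from \eqref{eq:bounds:apply:U:concentation:2}, the $L^2(\mathcal{M}^{|J|})$ bound $\|h_t^J\|_{L^2(\mathcal{M}^{|J|})}\leq Ct^{-d(|J|-1)/4}$.

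Finally, since $\operatorname{vol}(\mathcal{M})=1$ and $h_t^J$ is constant in the variables not indexed by $J$, its $L^2$-norm over $\mathcal{M}^{j+1}$ coincides with its $L^2$-norm over $\mathcal{M}^{|J|}$, so the two termwise bounds transfer directly to $\mathcal{M}^{j+1}$. Summing over $J\subseteq\{0,\dots,j\}$ and taking the worst case $|J|=j+1$, which dominates every smaller subset, produces the claimed estimates $\|h_t^{(j)}\|_{L^\infty}\leq Ct^{-dj/2}$ and $\|h_t^{(j)}\|_{L^2}\leq Ct^{-dj/4}$. The main technical point is the symmetry/relabelling step, which allows Lemma \ref{lemma:bounds:apply:U:concentation}, stated with a fixed distinguished variable $x_0$, to be applied to each of the $\ell+1$ center summands and to arbitrary subsets $J$; once this is in hand, the remainder is routine bookkeeping over a finite combinatorial index set.
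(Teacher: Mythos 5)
Your proposal is correct and follows essentially the same route as the paper: expand the product $(\delta_{x_0}-P)\times\cdots\times(\delta_{x_j}-P)\times P^{\ell-j}$ into a sum over subsets $J\subseteq\{0,\dots,j\}$ of partial integrals of $h_t$, reduce each of the $\ell+1$ center summands to the case $a=0$ by relabelling (using the symmetry of $h_t$, equivalently the alternating property of $D$), apply Lemma \ref{lemma:bounds:apply:U:concentation} termwise, and observe that $J=\{0,\dots,j\}$ gives the dominant exponent since $t\in(0,1]$. The only additions beyond the paper's argument are small explicit justifications (the count $2^{j+1}$ of subsets and the equality of the $L^2$-norms over $\mathcal{M}^{j+1}$ and $\mathcal{M}^{|J|}$ via $\operatorname{vol}(\mathcal{M})=1$), both of which are correct.
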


\begin{proof}
    By construction, we have 
    \begin{align*}
        h_t^{(j)}&=(\delta_{x_0}-P)\times \dots\times (\delta_{x_j}-P) \times P^{\ell-j} h_t=\sum_{J\subseteq \{0,\dots,j\}}(-1)^{j+1-|J|}\prod_{b\in J}\delta_{x_b}\times P^{\ell+1-|J|}h_t\\
        &=\sum_{J\subseteq \{0,\dots,j\}}\frac{(-1)^{j+1-|J|}}{\ell+1}\sum_{a=0}^\ell \int_{\mathcal{M}^{|J^\complement|}}\Big(\frac{1}{t^\ell}\prod_{\substack{b=0\\ b\neq a}}^\ell k_t(x_a,x_b)\Big) D^2(f_1,\dots,f_\ell,x_0,\dots,x_\ell)dx_{J^\complement}.
    \end{align*}
    The first claim follows from Lemma \ref{lemma:bounds:apply:U:concentation}, taking into account that $J=\{0,\dots,j\}$ provides the bound $Ct^{-dj/2}$ with the highest exponent and thus the dominating part because $t\in(0,1]$, and each summand with $a>0$ can be reduced to $a=0$ by relabeling the variables and using the alternating property of $D$. The second claim follows similarly from Minkowski's inequality and the second claim in Lemma \ref{lemma:bounds:apply:U:concentation}.    
\end{proof}

\begin{proof}[Proof of Proposition \ref{Prop:conc:U:statistic}]
    Using \eqref{eq:Hoeffding:decomposition} and Corollary \ref{corollary:bound:kernel:HD}, the concentration behavior of \eqref{eq:U:statistics} can be analyzed using the standard machinery for U-statistics. We follow the strategy of \cite{Mins24}. Let $\epsilon_1,\dots,\epsilon_n$ be independent Rademacher random variables independent of $X_1,\dots,X_n$. Then, by symmetrization (see \cite{SCK19} or \cite[Theorem 3.1]{PG99} for a result with slightly worse constants) and the Bonami inequality (\cite[Theorem 3.22]{PG99}), we have for $j=0,\dots,\ell$, 
    \begin{align*}
        &\mathbb{E}^{1/p}\Big|\frac{1}{\binom{n}{j+1}^{1/2}}\sum_{1\leq i_0<\dots<i_j\leq n}h_t^{(j)}(X_{i_0},\dots,X_{i_j})\Big|^p\\
        &\leq 2^{j+1}\mathbb{E}^{1/p}\Big|\frac{1}{\binom{n}{j+1}^{1/2}}\sum_{1\leq i_0<\dots<i_j\leq n}\epsilon_{i_0}\cdots\epsilon_{i_j}h_t^{(j)}(X_{i_0},\dots,X_{i_j})\Big|^p\\
        &\leq 2^{j+1}(p-1)^{\frac{j+1}{2}}\mathbb{E}^{1/p}\Big|\frac{1}{\binom{n}{j+1}}\sum_{1\leq i_0<\dots<i_j\leq n}(h_t^{(j)}(X_{i_0},\dots,X_{i_j}))^2\Big|^{p/2}.
    \end{align*}
    Next, we use a decoupling trick. For this, let $m$ be the largest integer such that $(j+1)m\leq n$. Then 
    \begin{align*}
        \frac{1}{\binom{n}{j+1}}\sum_{1\leq i_0<\dots<i_j\leq n}h_t^{(j)}(X_{i_0},\dots,X_{i_j})^2=\frac{1}{n!}\sum_{\sigma\in S_n}\frac{1}{m}\Big(\sum_{k=1}^m h_t^{(j)}(X_{\sigma((k-1)(j+1)+1)},\dots,X_{\sigma(k(j+1))})^2\Big)
    \end{align*}
    and thus by Jensen's inequality 
    \begin{align*}
        \mathbb{E}^{1/p}\Big(\frac{1}{\binom{n}{j+1}}\sum_{1\leq i_0<\dots<i_j\leq n}h_t^{(j)}(X_{i_0},\dots,X_{i_j})^2\Big)^{p/2}\leq \frac{1}{\sqrt{m}}\mathbb{E}^{1/p}\Big(\sum_{k=1}^m h_t^{(j)}(X_{(k-1)(j+1)+1},\dots,X_{k(j+1)})^2\Big)^{p/2}.
    \end{align*}
    Applying a moment inequality for nonnegative random variables \cite[Theorem 15.10]{BLM13}, we get
    \begin{align*}
        &\frac{1}{\sqrt{m}}\mathbb{E}^{1/p}\Big(\sum_{k=1}^mh_t^{(j)}(X_{(k-1)(j+1)+1},\dots,X_{k(j+1)})^2\Big)^{p/2}\\
        & \leq \frac{1}{\sqrt{m}}\Big(2m\mathbb{E}h_t^{(j)}(X_{1},\dots,X_{j+1})^2\Big)^{1/2}+\frac{1}{\sqrt{m}}\Big(\frac{p\sqrt{e}}{2}\mathbb{E}^{2/p}\max_{1\leq k\leq m}h_t^{(j)}(X_{(k-1)(j+1)+1},\dots,X_{k(j+1)})^p\Big)^{1/2}\\
        &\leq \sqrt{2}\|h^{(j)}\|_{L^2}+\frac{\sqrt{p}}{\sqrt{m}}\|h^{(j)}\|_{L^\infty}.
    \end{align*}
    Combining the above with Corollary \ref{corollary:bound:kernel:HD}, we arrive at
    \begin{align*}
        \mathbb{E}^{1/p}\Big|\frac{1}{\binom{n}{j+1}^{1/2}}\sum_{1\leq i_0<\dots<i_j\leq n}h_t^{(j)}(X_{i_0},\dots,X_{i_j})\Big|^p \leq 2^{j+1}(p-1)^{\frac{j+1}{2}}\Big(\sqrt{2}C\frac{1}{t^{dj/4}}+C\frac{\sqrt{p}}{\sqrt{m}}\frac{1}{t^{dj/2}}\Big).
    \end{align*}
    Now, since $n-j-1\geq n/2$ by assumption, we have $m(j+1)\geq n-j-1\geq n/2$, that is $m\geq n/(2(j+1))$, as well as
    \begin{align*}
        \binom{n}{j+1}=\frac{n\cdots (n-j)}{(j+1)!}\geq \Big(\frac{n}{2}\Big)^{j+1}\frac{1}{(j+1)!}.
    \end{align*}
    We conclude that 
    \begin{align*}
        \mathbb{E}^{1/p}\Big|\frac{1}{\binom{n}{j+1}^{1/2}}\sum_{1\leq i_0<\dots<i_j\leq n}h_t^{(j)}(X_{i_0},\dots,X_{i_j})\Big|^p
        &\leq C\Big(\frac{p^{\frac{j+1}{2}}}{t^{\frac{dj}{4}}n^{j+1}}+\frac{p^{\frac{j+2}{2}}}{t^{\frac{dj}{2}}n^{\frac{j+2}{2}}}\Big).
    \end{align*}
    Inserting these bounds into \eqref{eq:Hoeffding:decomposition} and using Minkowski's inequality, we get
    \begin{align*}
        \mathbb{E}^{1/p}\Big|U_{n}(\ell,t)&-\int_{\mathcal{M}^{\ell+1}}\Big(\det_{\ell\times\ell}(f_a(x_b)-f_a(x_0))\Big)^2\Big(\frac{1}{t^\ell}\prod_{b=1}^\ell k_t(x_0,x_b)dx_b\Big)dx_0\Big|^p\\
        &\leq C\sum_{j=0}^{\ell}\Big(\frac{p^{\frac{j+1}{2}}}{t^{\frac{dj}{4}}n^{j+1}}+\frac{p^{\frac{j+2}{2}}}{t^{\frac{dj}{2}}n^{\frac{j+2}{2}}}\Big).
    \end{align*}
    Inserting this into Markov's inequality
    \begin{align*}
        &\mathbb{P}\Big(\Big|U_{n}(\ell,t)-\int_{\mathcal{M}^{\ell+1}}\Big(\det_{\ell\times\ell}(f_a(x_b)-f_a(x_0))\Big)^2\Big(\frac{1}{t^\ell}\prod_{b=1}^\ell k_t(x_0,x_b)dx_b\Big)dx_0\Big|>u\Big)\\
        &\leq \frac{1}{u^p}\mathbb{E}\Big|U_{n}(\ell,t)-\int_{\mathcal{M}^{\ell+1}}\Big(\det_{\ell\times\ell}(f_a(x_b)-f_a(x_0))\Big)^2\Big(\frac{1}{t^\ell}\prod_{b=1}^\ell k_t(x_0,x_b)dx_b\Big)dx_0\Big|^p
    \end{align*}
    the claim follows from the choices $p=\log n$ and 
    \begin{align*}
        u=e^{A}C\sum_{j=0}^{\ell}\Big(\frac{p^{\frac{j+1}{2}}}{t^{\frac{dj}{4}}n^{j+1}}+\frac{p^{\frac{j+2}{2}}}{t^{\frac{dj}{2}}n^{\frac{j+2}{2}}}\Big).
    \end{align*}
\end{proof}

\begin{proof}[End of the proof of Theorem \ref{thm:main:result}]
    Decomposing $\langle\omega,\Delta_{\ell-1}^{\up}\omega\rangle-\langle\boldsymbol{\omega},\mathscr{L}_{\ell-1}^{\up}\boldsymbol{\omega}\rangle_n$ into the bias term $\langle\omega,\Delta_{\ell-1}^{\up}\omega\rangle-\mathbb{E} \langle\boldsymbol{\omega},\mathscr{L}_{\ell-1}^{\up}\boldsymbol{\omega}\rangle_n$ and the variance term $\mathbb{E} \langle\boldsymbol{\omega},\mathscr{L}_{\ell-1}^{\up}\boldsymbol{\omega}\rangle_n-\langle\boldsymbol{\omega},\mathscr{L}_{\ell-1}^{\up}\boldsymbol{\omega}\rangle_n$, \Cref{thm:main:result} follows from inserting \Cref{prop:bias,Prop:conc:U:statistic} and the triangle inequality.
\end{proof}

\section{Experiments}
\label{section:experiments}

We compare the spectra of the empirical operators $\mathscr{L}_1^{\up}$ and $\mathscr{L}_1^{\down}$ with the corresponding population eigenvalues of $\Delta_1^{\up}$ and $\Delta_1^{\down}$ on the unit volume sphere $r\cdot S^2$ with radius $r = (4\pi)^{-1/2}.$\\ 

\noindent\textbf{Population spectrum and heat kernel.} The spectrum of $\Delta_{1}$ for the unit sphere $S^2$ is obtained in \cite[Theorems A-C]{Folland89} (see additionally \cite[Chapter 4]{MR304972} for the multiplicities). For each fixed $r > 0,$ the nonzero spectra of both $\Delta_1^{\up}$ and $\Delta_1^{\down}$ of $r\cdot S^2$ are the same. In each case, for $j \geq 1,$ the $j$-th positive eigenvalue is $j(j+1)/r^2$ and has multiplicity $2j+1$ and the spectrum of $\Delta_1$ is the multiset union of the non-zero spectra of $\Delta_1^{\up}$ and $\Delta_1^{\down}.$ Table \ref{fig.table.eigenvalues} below lists the corresponding first $4$ eigenvalues of $\Delta_1.$ We will denote by $\lambda_j^{\up}$ and $\lambda_j^{\down}$ the ascending positive eigenvalues, counted with multiplicities, of $\Delta_1^{\up}$ and $\Delta_1^{\down},$ respectively.

The heat kernel of $r\cdot S^2$ admits a Mercer expansion in terms of spherical harmonics \cite{zhao.fams.2018.00001, atkinson2012spherical}. Rescaling and applying the addition theorem for spherical harmonics \cite{Male_ek_2001}, leads to 
\begin{align}
    \label{eq.heat.kernel.sphere}
    k_t(x, y) = \sum_{j = 0}^\infty \frac{2j+1}{4\pi r^2}e^{-\tfrac{j(j+1)t}{r^2}}P_j\Big(\frac{\langle x, y\rangle}{r^2}\Big), \qquad x, y \in r\cdot S^2,
\end{align}
where $P_j$ is the $j$-th Legendre polynomial.

\begin{figure}[H]
\centering

\begin{minipage}{0.6\textwidth}
\centering
\begin{tabular}{|c|c|ccc|}
\hline
& & \multicolumn{3}{c|}{Multiplicities} \\
Index & Eigenvalue & $\Delta_1^{\down}$ & $\Delta_1^{\up}$ & $\Delta_1$ \\
\hline
1 & $2/r^2$ & $3$ & $3$ & $6$\\
2 & $6/r^2$ & $5$ & $5$ & $10$ \\
3 & $12/r^2$ & $7$ & $7$ & $14$\\
4 & $20/r^2$ & $9$ & $9$ & $18$\\
\hline
\end{tabular}
\captionof{table}{First eigenvalues of $\Delta_1$ on $r\cdot S^2,$ broken \\ down by contributions from $\Delta_{1}^{\up}$ and $\Delta_1^{\down}.$}
\label{fig.table.eigenvalues}
\end{minipage}
\hfill
\begin{minipage}{0.39\textwidth}
\centering
\includegraphics[height=6.23\baselineskip, trim=2cm 8cm 2cm 8cm, clip]{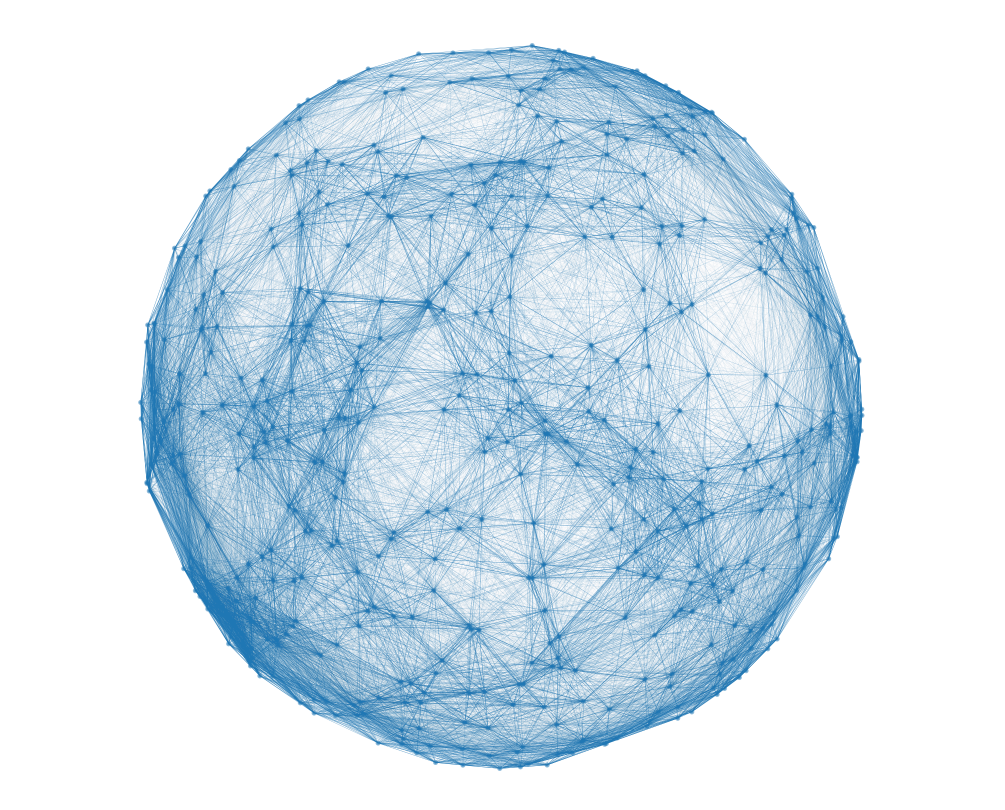}
\caption{Weighted graph of $V$. Edge thickness is proportional to $k_{ij}^{(\alpha)}$.}
\label{fig.data.cloud}
\end{minipage}

\end{figure}

\noindent\textbf{Eigenvalue problem.} We sample $X_1, \dots, X_n$ as i.i.d.\ random variables uniformly distributed on $r\cdot S^2.$ The spaces $L^2_{\wedge}(V^{\ell + 1})$ are identified with $\mathbb{R}^{d_\ell}$ of dimension $d_\ell:={ n\choose \ell + 1}$ with the canonical basis indexed by ordered $(\ell + 1)$-tuples of distinct elements of $\{1, \dots, n\}.$ Let $B_1 \in \mathbb{R}^{d_{\ell +1} \times d_\ell}$ denote the matrix representation of $\delta_1$ in these bases, where the $(t,e)$-entry of $B_1$ is zero if the edge corresponding to $e$ is not included in the triangle corresponding to $\tau$ and equals to the sign induced by \eqref{eq:Helm:up} otherwise (see also \cite{Lim20, doi:10.1137/22M1482299, MR4164275} for more details). The weighted inner products on $L^2_{\wedge}(V^2)$ and $L^2_{\wedge}(V^3)$ are represented by diagonal matrices $W_1$ and $W_2,$ respectively. Since the operator $\delta_1^*$ is the adjoint of $\delta_1,$ it has the matrix representation $W_1^{-1}B_1^\top W_2$. Consequently, $\mathscr{L}_1^{\up}=\delta_1^*\delta_1$ has the matrix representation $W_1^{-1}B_1^\top W_2B_1$. Taking the weighted inner product into account, we aim at obtaining the eigenvalues of $W_1^{-1/2}B_1^\top W_2B_1W_1^{-1/2},$ which can be obtained by solving the generalized eigenvalue problem
\begin{align}
    \label{eq.gen.ev.problem}
    B_1^TW_2B_1w = \lambda W_1w, \qquad (\lambda, w) \in \mathbb{R}\times \mathbb{R}^{d_{\ell}}.
\end{align}
 To construct $W_1$ and $W_2$, we approximate the true heat kernel by the $50$ leading terms of \eqref{eq.heat.kernel.sphere}, and the weights $(k_{i_0\dots i_{\ell}}), \ \ell \geq 1$ are obtained by plugging the result in \eqref{eq:definition:weights}. The order-zero weights $(k_i)$ are set to $1.$ In the approximation of the spectrum of $\mathscr{L}_1^{\up},$ we additionally perform a dimensionality reduction step by using the thresholded weights
\begin{align}
    \label{eq.thresholded.weights}
    k_{ij}^{(\alpha)} := k_{ij}\mathds{1}(k_{ij} \geq t^{\alpha-1}/(4\pi)), \ \text{ and } k_{ijk}^{\alpha} := k_{ijk}\mathds{1}(k_{ij}^{(\alpha)} k_{ik}^{(\alpha)} k_{jk}^{(\alpha)} > 0),
\end{align}
with $\alpha > 2$. The resulting weighted graph is visualized in Figure \ref{fig.data.cloud}. Since now $W_1$ and $W_2$ have zero entries, we restrict $\mathbb{R}^{d_1}$ and $\mathbb{R}^{d_2}$ and the matrices $W_1,W_2,B_1$ to their respective ranges. Finally, the positive spectrum of $\mathscr{L}_1^{\down}$ is obtained from that of the weighted unnormalized graph Laplacian of Example \ref{ex.graph.laplacian} with non-thresholded weights. \\

\noindent\textbf{Numerical procedure and results.} The experiments are implemented in Python. We use the function \verb|eigsh| from \verb|scipy| and obtain numerical approximations to the eigenvalues above. The up-Helmholtzian has a large kernel of dimension $\dim\ker(B_1).$ We remove the numerical eigenvalues that correspond to this kernel by discarding the numerical approximations smaller than $10^{-1}.$ The remaining numerical eigenvalues, sorted in ascending order, are denoted by $\hat\lambda_j^{\up}(t), \ j\geq 1.$ The same procedure applied to the numerical eigenvalues of the graph Laplacian results in $\hat\lambda_j^{\down}(t).$ We perform grid searches on the respective time parameters $t_u$ and $t_d$ of $\hat\lambda_j^{\up}$ and $\hat\lambda_j^{\down}$ and find approximate minimizers of 
\begin{align*}
    \mathcal{E}^{\down}(t_{d}) = \frac{1}{J}\sum_{j=1}^J\frac{|\lambda_j^{\down} - \hat \lambda_j^{\down}(t_{d})|}{\lambda_j^{\down}},\qquad \mathcal{E}^{\up}(t_{u}) = \frac{1}{J}\sum_{j=1}^J\frac{|\lambda^{\up}_j - \hat \lambda_j^{\up}(t_{u})|}{\lambda^{\up}_j},
\end{align*}
with $J = 8.$ Denote by $\hat t_d$ and $\hat t_u$ the resulting approximate minimizers. We set $n=700,$ and $\alpha=2.1.$ We set \verb|scipy|'s \verb|eigsh| to compute \verb|k = 450| numerical eigenvalues, with \verb|which = 'SA'| and \verb|tol = 1e-1|. $t^{\up}$ varies along $20$ regularly spaced values in $[0.4n^{-2/3}, 1.2n^{-2/3}]$ and $t^{\down}$ along $20$ regularly spaced values in $[0.1n^{-2/3}, 0.4n^{-2/3}].$ The experiment is repeated with $10$ different realizations of $X_1, \dots, X_n.$

The results are comparable to those of \cite{MR4452681}, where numerical simulations were run for the random-walk Laplacian on data from $ r\cdot S^2$. For $n=700,$ they obtain a minimal value of $\mathcal{E}^{\down}$ close to $10^{-0.92} \approx  0.120.$ We find $0.121\pm 0.010,$ showing close agreement between the two sets of results.

\begin{figure}[H]
\centering
\begin{tabular}{@{}ccc@{}}
    \includegraphics[width=0.31\textwidth, trim=3.5cm 9.5cm 3.5cm 9.5cm, clip]{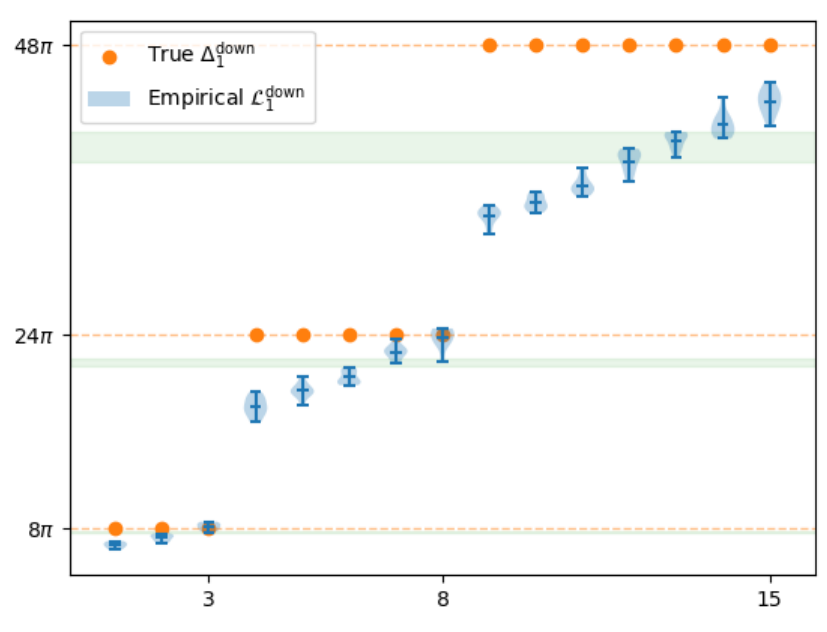} &
    \includegraphics[width=0.31\textwidth, trim=3.5cm 9.5cm 3.5cm 9.5cm, clip]{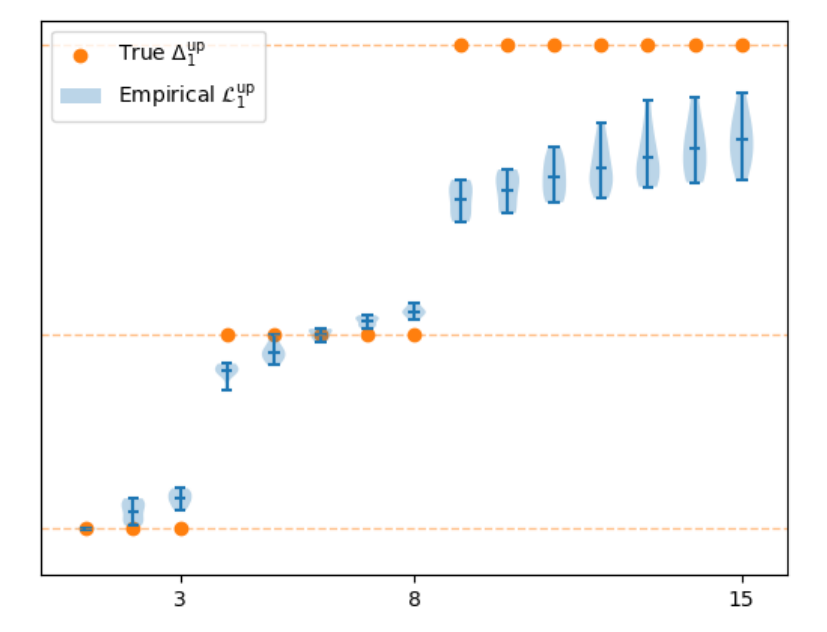} &
    \includegraphics[width=0.31\textwidth, trim=3.5cm 9.5cm 3.5cm 9.5cm, clip]{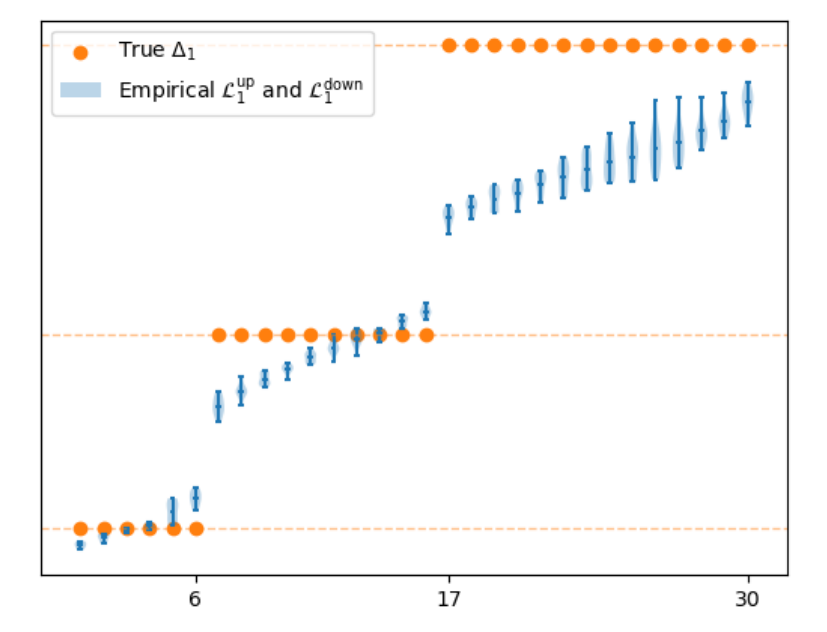}
\end{tabular}
\caption{Empirical and true eigenvalues of $r\cdot S^2.$ Index $i$ is associated with the violin plot of $\hat\lambda_i.$ The top, middle, and bottom horizontal bars are, respectively, the maximum, mean, and minimum over the $10$ realizations. \textbf{Left:} $15$ smallest eigenvalues $\hat\lambda_j^{\down}.$ \textbf{Center:} $15$ smallest eigenvalues $\hat\lambda_j^{\up}.$ \textbf{Right:} Bottom of the spectrum of $\mathscr{L}_1,$ obtained as the multiset union of the spectra shown in the left and center panels.}
\label{fig.spectra.sphere}
\end{figure}

\indent Figure \ref{fig.spectra.sphere} depicts the empirical distributions of each of the resulting eigenvalues. The empirical eigenvalues are clearly separated into clusters in all of the three plots. The gap between the first two clusters of empirical eigenvalues exceeds half of the corresponding population gap. We minimize $\mathcal{E}$ on the first two clusters of eigenvalues. The green bands in Figure \ref{fig.spectra.sphere} show the ranges of the first eigenvalue levels of the approximation $(1 - e^{-\hat t_d\lambda_k})/\hat t_d.$

\section*{Acknowledgements}
The research of JPL has been supported by a short-time postdoc fellowship at the Research Centre for Mathematical Modelling (RCM$^2$) in Bielefeld.

\bibliographystyle{plain}
\bibliography{references}

\end{document}